\documentclass{article}

\usepackage[latin1]{inputenc}
\usepackage{amsfonts, amsmath, amsthm, amssymb}
\usepackage{color, graphicx, caption, booktabs,minibox,url}
\usepackage{lscape,fullpage}

\newtheorem{theorem}{Theorem}

\newtheorem{lemma}[theorem]{Lemma}

\newcommand{\ds}{\displaystyle}

\def\cS{\mathcal{S}}
\def\cA{\mathcal{A}}\def\cB{\mathcal{B}}
\def\cR{\mathcal{R}}
\def\cM{\mathcal{M}}
\def\cN{\mathcal{N}}\def\cN{\mathcal{N}}
\def\cD{\mathcal{D}}\def\cP{\mathcal{P}}
\def\cL{\mathcal{L}}\def\cH{\mathcal{H}}
\def\cF{\mathcal{F}}\def\cT{\mathcal{T}}

\title{Enumeration of labelled 4-regular planar graphs II: asymptotics}
\date{}
\author{
    Marc Noy
    \thanks{Departament de Matemàtiques and Institut de Matemàtiques de la UPC (IMTech), Universitat Politècnica de Catalunya \& Centre de Recerca Matemàtica, Barcelona, Spain. 
    E-mail: {\tt marc.noy@upc.edu}.}
\and 	
    Cl\'ement Requil\'e	
    \thanks{Departament de Matemàtiques and Institut de Matemàtiques de la Universitat Politècnica de Catalunya (IMTech), Barcelona, Spain.
    E-mail: {\tt clement.requile@upc.edu}.}
\and
    Juanjo Ru\'e
    \thanks{Departament de Matemàtiques and Institut de Matemàtiques de la UPC (IMTech), Universitat Politècnica de Catalunya \& Centre de Recerca Matemàtica, Barcelona, Spain. 
    E-mail: {\tt juan.jose.rue@upc.edu}.}
}

\begin{document}

\maketitle

\abstract{
This work is a follow-up of the article [Proc.\ London Math.\ Soc.\ 119(2):358--378, 2019], where the authors solved the problem of counting labelled 4-regular planar graphs.
In this paper, we obtain a precise asymptotic estimate for the number $g_n$ of labelled 4-regular planar graphs on $n$ vertices.
Our estimate is of the form $g_n \sim g\cdot n^{-7/2} \rho^{-n} n!$, where $g>0$ is a constant and $\rho \approx 0.24377$ is the radius of convergence of the generating function $\sum_{n\ge 0}g_n x^n/n!$, and conforms to the universal pattern obtained previously in the enumeration of several classes of planar graphs.
In addition to analytic methods, our solution needs intensive use of computer algebra in order to deal with large systems of multivariate polynomial equations.
We also obtain asymptotic estimates for the number of 2- and 3-connected 4-regular planar graphs, and for the number of 4-regular simple maps, both connected and 2-connected.
}

%%%%%%%%%%%%%%%%%%%%%%%%%%%%%%%%%%%%%%%%%%%%%%%%%%%%%%%%%%%%%%%%%%%%%%%%%%%%%%%
%
% Introduction and statement of results
%
%%%%%%%%%%%%%%%%%%%%%%%%%%%%%%%%%%%%%%%%%%%%%%%%%%%%%%%%%%%%%%%%%%%%%%%%%%%%%%%
\section{Introduction and statement of results}

This paper is a follow-up of \cite{NRR19}, where the authors solved the problem of counting labelled 4-regular planar graphs.
The solution was based on decomposing a 4-regular planar graph along its 3-connected components and finding equations relating the generating functions associated to several classes of planar graphs and maps. 
Our main contribution here is a precise asymptotic estimate for the number of 4-regular planar graphs. 

Starting from the systems of equations in \cite{NRR19}, we determine the dominant singularities of the generating functions and compute the corresponding singular expansions.
Since the functions involved are in all cases algebraic, this can be done through the computation of discriminants (selecting the right factor) and Puiseux expansions (selecting the right branch). 
Then, using singularity analysis \cite{FS09} we show that the number of labelled 4-regular planar graphs on $n$ vertices is asymptotically (understood throughout the paper as $n\to\infty$) equal to
\begin{equation}\label{eq:main-asymp}
	g_n \sim g\cdot n^{-7/2} \gamma^{n} n!,
\end{equation}
where $g>0$ is a constant, $\gamma =\rho^{-1} \approx 4.10228$ and $\rho$ is the radius of convergence of $\sum_{n\ge 0}g_n x^n/n!$.
Analogous results hold for the number of 2- and 3-connected 4-regular planar graphs, and also for simple 4-regular maps.
The estimates conform in all cases to the universal pattern obtained previously in the enumeration of planar graphs and maps \cite{BKLMcD07,BGW02,GN09,NRR19+}.

The asymptotic enumeration of labelled planar graphs and related classes of graphs is currently an active area of research. 
A relevant starting point was the enumeration of 2-connected planar graphs \cite{BGW02}, which opened the way to the full enumeration of all planar graphs \cite{GN09}, and to the enumeration of graphs on surfaces \cite{CFGMN11} and  of several minor-closed classes of graphs \cite{GGNW08,GNR13}.

Cubic (that is, 3-regular) planar graphs were counted in \cite{BKLMcD07} and analyzed further in \cite{NRR19+}.
In all these cases the solution is obtained through generating functions. 
Let $\mathcal{A}$ be a class of labelled graphs closed under isomorphism and let $A(x)=\sum a_n x^n/n!$ be the associated generating function, where $a_n$ is the number of graphs in $\mathcal{A}$ with $n$ vertices. 
The first task is to locate the radius of convergence $\rho$ of $A(x)$ which in most cases is the unique singularity of smallest modulus (an exception are cubic graphs since they have necessarily an even number of vertices, so that $A(x)$ is even and both $\pm \rho$ are singularities). 
This gives the exponential growth $\rho^{-n}n!$ for $a_n$.

The next task is to obtain the subexponential term, which for all the classes described, including 4-regular planar graphs in this paper, is of the form $c\cdot n^{-7/2}$ for some $c>0$. 
The rationale for this pattern comes from the enumeration of \emph{planar maps} (see Section \ref{sec:prelim}). 
For all `natural' classes of planar maps, the subexponential term is $n^{-5/2}$. 
This phenomenon has been explained in a number of ways. 
Probably the most combinatorial explanation is that maps are in bijection with `enriched' trees up to the choice of the root vertex in the tree (see \cite[Chapter 5]{handbook15}), but it can also be explained analytically  \cite{DNY22}.
Since the subexponential term for trees is systematically $n^{-3/2}$, the one for maps becomes $n^{-3/2}/n = n^{- 5/2}$. 
In the enumeration of planar graphs up to date, the starting point has been the subclass of 3-connected graphs in $\mathcal{A}$.
But since by \textit{Whitney's theorem} \cite{W33} a 3-connected planar graph admits a unique embedding in the sphere up to the choice of the orientation, counting 3-connected planar graphs is equivalent to counting 3-connected planar \emph{maps}. 
Typically the generating function of maps are algebraic, leading to singularities of square-root type, hence to subexponential terms of the form $n^{-\alpha}$, where $\alpha$ is a half integer. 
Maps are rooted at an edge, hence the subexponential term in classes of \emph{unrooted} maps is typically $n^{-5/2}/n = n^{-7/2}$. 
The same is true for planar graphs: graphs rooted at a vertex or at an edge give rise to an $n^{-5/2}$ term, while for unrooted graphs it is $n^{-7/2}$, as in our main result \eqref{eq:main-asymp}. 
However, this is not true for series-parallel or outerplanar graphs \cite{BGKN07} and more generally for `subcritical' classes of graphs \cite{DFKKR11}.
The reason is that graphs in these classes are made of a linear number of small blocks and behave like trees, so that the subexponential term for unlabelled graphs is $n^{-5/2}$ instead of $n^{-7/2}$ (see~\cite{GNR13} for a general discussion).

Several open problems remain in the enumeration of planar graphs, most notable the enumeration of \emph{unlabelled} planar graphs. 
Not even the growth constant $\gamma_u$ is known. 
It is however possible to show that  $\gamma_u > \gamma \approx 27.23$, where $\gamma$ is the constant for labelled planar graphs, and also the upper bound $\gamma_u < 30.06$ (see the discussion in \cite{GN09}).
Other natural open problems in the enumeration of planar graphs are: bipartite, triangle-free (more generally, $H$-free for fixed graph $H$), 4-connected and 5-regular.

A technical obstacle in our analysis is the size of the multivariate polynomials equations involved, in terms of the degree and the size of the coefficients, when one performs elimination.
To overcome this situation, we use the classical technique of evaluation and polynomial interpolation for elimination \cite{GG13}.
This results in quite large equations (two of them would need about 30 pages each to be printed), but remains within the capabilities of the \texttt{Maple} computer algebra system using a powerful computer.

We first find the equations for 3-connected 4-regular planar maps counted according to simple and double edges.
In order to guarantee correctness of our results, we need an upper bound on the degree of the resultant.
In our case the best upper bound we obtain is 160, hence we have to interpolate at 161 points to guarantee the correctness of the result.
In this way, we obtain the minimal polynomials satisfied by the generating functions $T_1(u,v)$ and $T_2(u,v)$ counting 3-connected 4-regular planar maps, as given in Appendix \ref{app:min-pol-T1T2}.

Once the equations for the $T_i(u,v)$ are obtained explicitly, further elimination yields the minimal polynomial $P(x,y)$ satisfied by the generating function $C^\bullet(x)=xC'(x)$ of vertex-rooted connected 4-regular planar graphs.
Since the polynomial $P$ is too large to be displayed in print, we provide a link to fully annotated \texttt{Maple} files\footnote{\url{https://requile.github.io/4-regular_planar_maple.zip}}, where all our computations can be reproduced.
In the sequel we refer to these files as ``the \texttt{Maple} sessions''.
From $P$ we compute the dominant singularity of $C(x)$, which turns out to be an algebraic number of degree~14, as shown in Theorem \ref{thm:main}.
Then we perform similar computations for 2-connected and 3-connected 4-regular planar maps, as well as for 4-regular simple maps (in this last case the minimal polynomials are small enough to be reproduced in Appendix \ref{app:min-pol-maps}).
These results together with the corresponding asymptotic estimates are the content of the following three theorems.

\newpage
Our main result is an estimate for the number of connected and arbitrary 4-regular planar graphs.
\begin{theorem}\label{thm:main}
(a) The number $c_n$ of connected 4-regular labelled planar graphs is asymptotically
\begin{equation*}
	c_n \sim c \cdot n^{-7/2}\cdot \gamma^{n}\cdot n!, \quad
 	\text{with } c \approx 0.0013911 \text{ and } \gamma=\rho^{-1} \approx 4.10228,
\end{equation*}
where $\rho \approx 0.24377$ is the smallest positive root of
\begin{equation}\label{poly:gamma}
	\begin{split}
	& 12397455648000x^{14} + 99179645184000x^{13} - 263210377713408x^{11} \\
	& + 4123379191922784x^{10} - 1230249287613888x^9 \\
	& - 18655766288483533x^8 + 51831438989552290x^7 \\
	& + 97598878903661028x^6 + 620596059256280x^5 \\
	& + 15894289357702528x^4 - 63729042783408384x^3 \\
	& - 66418928650596352x^2 + 64476004593270784x \\
	& - 109267739753840648 = 0.
	\end{split}
\end{equation}
(b) The number $g_n$ of 4-regular labelled planar graphs with $n$ vertices is asymptotically 	
\begin{equation*}
	g_n \sim g\cdot n^{-7/2}\cdot \gamma^{n}\cdot n!,
\end{equation*}
where $g > 0$ is a constant and where $\gamma$ is as in (a).
\end{theorem}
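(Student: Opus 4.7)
The plan is to deduce both estimates from the algebraic equation $P(x,y)=0$ satisfied by $y=C^\bullet(x)=xC'(x)$, which was constructed in the preceding sections by elimination from the system for $T_1(u,v)$ and $T_2(u,v)$. The approach follows the standard Flajolet--Odlyzko singularity-analysis scheme, specialized as in the planar graph literature \cite{GN09,NRR19+}.

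First I would locate the dominant singularity. By Pringsheim's theorem, since $C^\bullet$ has non-negative coefficients, its radius of convergence $\rho$ is a positive real singularity. As $C^\bullet$ is algebraic with minimal polynomial $P$, all singularities are branch points and therefore roots of the $y$-discriminant
\[
\Delta(x)=\mathrm{Res}_y\bigl(P(x,y),\,\partial_y P(x,y)\bigr),
\]
computed via the \texttt{Maple} sessions. Factoring $\Delta(x)$ should produce equation~(\ref{poly:gamma}) as one of its irreducible factors; its smallest positive root is $\rho\approx 0.24377$, and checking irreducibility over $\mathbb{Q}$ shows this to be the minimal polynomial of $\rho$, of degree $14$. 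One then has to certify that the analytic branch of $P=0$ representing $C^\bullet(x)$, starting from $C^\bullet(0)=0$, is indeed the one becoming singular at $\rho$; this is done by tracking $C^\bullet$ numerically along $[0,\rho)$ and verifying that a pair of distinct branches of $P$ coalesce at $(\rho,C^\bullet(\rho))$.

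Next I would compute the Puiseux expansion at $\rho$. Setting $w=\sqrt{1-x/\rho}$ and solving $P=0$ iteratively yields
\[
C^\bullet(x)=y_0+y_2 w^2+y_3 w^3+y_4 w^4+\cdots,
\]
with each $y_i$ algebraic over $\mathbb{Q}(\rho)$ and numerically computable to arbitrary precision. The critical point is the non-vanishing of $y_3$, which encodes the universal ``$3/2$-singularity'' coming from the planar graph decomposition and is responsible for the factor $n^{-7/2}$. Flajolet--Odlyzko transfer then gives
\[
[x^n]C^\bullet(x)\sim\frac{y_3}{\Gamma(-3/2)}\,n^{-5/2}\rho^{-n}.
\]
Since $[x^n]C^\bullet=n\,[x^n]C$, one obtains $c_n=(n-1)!\,[x^n]C^\bullet\sim (y_3/\Gamma(-3/2))\,n^{-7/2}\gamma^n\,n!$, which is part~(a); numerical evaluation of $y_3$ should give the stated constant $c\approx 0.00139114$.

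Finally, part~(b) follows from the exponential formula $G(x)=\exp(C(x))$. The singularity of $C$ at $\rho$ is of type $(1-x/\rho)^{5/2}$, so $C(\rho)$ is finite; writing $C(x)=A(x)+B(x)(1-x/\rho)^{5/2}$ with $A,B$ analytic at $\rho$, we obtain
\[
G(x)=e^{A(x)}\Bigl(1+B(x)(1-x/\rho)^{5/2}+O\bigl((1-x/\rho)^{5}\bigr)\Bigr),
\]
which inherits the same radius of convergence and singular exponent, so transfer yields $g_n\sim g\,n^{-7/2}\gamma^n\,n!$ with $g=e^{C(\rho)}\cdot c$; the closeness of $g$ and $c$ reflects the fact that $C(\rho)$ is numerically very small. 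The main obstacle throughout is the sheer size of $P(x,y)$ and of $\Delta(x)$: all resultant, factorization and Puiseux computations must be handled by computer algebra, one must discard the extraneous factors of $\Delta(x)$ introduced during elimination, and one must confirm computationally that the selected branch of $P$ is the one representing $C^\bullet$.
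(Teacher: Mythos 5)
Your overall scheme matches the paper's: eliminate to a bivariate minimal polynomial for $C^\bullet(x)=xC'(x)$, locate the dominant singularity via the $y$-discriminant, extract a Puiseux expansion with a $(1-x/\rho)^{3/2}$ term, transfer, divide by $n$, and exponentiate for part~(b). Two points, however, are glossed over.

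First, the discriminant has several irreducible factors with positive roots $<1$, and deciding which one carries the dominant singularity is not done in the paper by numerically tracking the combinatorial branch. Instead the paper works with the minimal polynomial of the auxiliary series $D(x)$ (observing that $C^\bullet$ is an analytic function of $D$, hence has the same singular behaviour), factors its discriminant as $f(x)g(x)^2h(x)^3$, and then rules out $f$ and $g$ by a sandwich argument: candidate roots smaller than the singularity $\rho_1\approx 0.0367$ of general labelled planar graphs are impossible, and candidate roots larger than the singularity $\tau\approx 0.24451$ of $3$-connected $4$-regular planar graphs are also impossible. Your ``track the branch numerically until two sheets coalesce'' is not wrong, but on its own it is not a certified argument (a branch point could hide between sample points, or numerical error could cause you to hop sheets); the paper's comparison bounds give a clean proof. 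If you want to keep the numerical-tracking route you would need interval arithmetic or a separation bound.

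Second, and more importantly, your treatment of part~(b) hides the real obstacle. Your formula $g=e^{C(\rho)}\,c$ is indeed correct, since $G=\exp(C)$ and $G_3=e^{C_0}C_3$ with $C_0=C(\rho)$. But $C_0$ is the constant of integration of $C^\bullet(x)/x$ and is \emph{not} produced by the algebraic elimination: the Puiseux data for $C^\bullet$ determine $C_2,C_3,\dots$ of $C$ but leave $C_0$ free. The paper explicitly notes this, explains that the standard fix via the dissymmetry theorem would require integrating $T_1(u,v)$ in $u$, and that this is blocked because the corresponding plane curve has genus~1 and admits no rational parametrisation. Consequently $g$ is the one constant in the theorem that is \emph{not} given by an exact algebraic expression, and is instead estimated numerically from the initial coefficients $g_n$. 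Your phrase ``the closeness of $g$ and $c$ reflects the fact that $C(\rho)$ is numerically very small'' is accurate, but you should state that $C(\rho)$ has to be obtained by numerical estimation rather than from the minimal polynomial, which is why the paper singles $g$ out as the only non-exactly-determined constant.
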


Although we cannot determine the constant $g$ in the previous statement analytically, its existence can be shown as follows. 
Since the class of 4-regular planar graphs is closed under disjoint unions, it follows from a standard argument that the number of connected components in  the class is asymptotically distributed like $1+X$, where $X$ is a Poisson random variable with parameter $\lambda = C(\rho)$. 
Hence the probability that a random graph is connected tends to $p =e^{-\lambda}$ as  $n\to\infty$.
We cannot compute $C(\rho)$ since we only have access to the derivative $C'(x)$, which is an algebraic function of degree 29 as explained in Section \ref{sec:asymptotic}.
If the algebraic curve defined from the minimal polynomial satisfied by $C'(x)$ were rational (i.e. of genus 0), one could find a rational parametrization and integrate it.
But it actually has genus 30, which prevents us from integrating it in that way. 

Instead, to estimate $p$ we use $c_n/g_n$ for $n$ fixed and as large as we can. 
For instance, we obtain $c_{500}/g_{500}\approx 0.999993$, and with this value we have $g=c/p\approx 0.00139$ (see the corresponding \texttt{Maple} session). 
We only know that the rate of convergence of $c_n/g_n$ is of order $O(1/n)$, because of general principles of analytic combinatorics, and thus we are unable to establish a confidence interval.
This value for $p$ close to $1$ makes sense since the smallest connected component is the graph of the octahedron, which is relatively large.
By contrast, the probability that a random planar graph is connected tends to $0.96325$ (see \cite{GN09}), while for a random cubic planar graph it is $0.999397$ (see \cite{NRR19+}).

Our next result are estimates for the number of 3- and 2-connected 4-regular planar graphs.
\begin{theorem}\label{thm:2-3-conn}
(a) The number $t_n$ of 3-connected 4-regular labelled planar graphs  with $n$ vertices  is asymptotically
\begin{equation*}
	t_n \sim t \cdot n^{-7/2}\cdot (\gamma_3)^{n}\cdot n!,
 	\text{with } t \approx 0.0012070 \text{ and } \gamma_3=\tau^{-1} \approx 4.08978,
\end{equation*}
where $\tau = \frac{88- 12\sqrt{21}}{135} \approx 0.24451$.

\medskip

(b) 	The number $b_n$ of 2-connected 4-regular labelled planar graphs  with $n$ vertices  is asymptotically
\begin{equation*}
	b_n \sim b \cdot n^{-7/2}\cdot (\gamma_2)^{n}\cdot n!, \text{with } b \approx 0.0000575832 \text{ and } \gamma_2=\beta^{-1} \approx 4.10175,
\end{equation*}
where $\beta \approx 0.2437981094$ is the smallest positive root of
\begin{equation}\label{poly:beta}
	\begin{split}
		& 12397455648000x^{11} + 24794911296000x^{10} - 148769467776000x^9 \\
		& + 1125304654862592x^8 - 451035134375328x^7 \\
		& - 7923244598779392x^6 + 38505114557935859x^5 \\
		& - 67113688868067728x^4 + 70322996382137760x^3 \\
		& - 43445179814077952x^2 + 12857755940483072x \\
		& - 1365846746923008 = 0.
	\end{split}
\end{equation}
\end{theorem}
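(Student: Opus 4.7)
The plan is to follow the same singularity-analysis template used for Theorem~\ref{thm:main}: (i) derive an algebraic equation for the appropriate rooted generating function, (ii) locate its smallest positive singularity $\rho_*$, (iii) show that $\rho_*$ is a branch point of square-root type and compute the leading terms of the Puiseux expansion there, and (iv) apply the standard transfer theorem together with the usual un-pointing step to recover the asymptotics for the unrooted class. As in the connected case, one expects the dominant singular exponent $3/2$ of the rooted series to translate into the universal $n^{-7/2}$ exponent characteristic of planar subcritical classes.

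For part (a), the starting point is the minimal polynomials of $T_1(u,v)$ and $T_2(u,v)$ for 3-connected 4-regular planar maps, computed in Section~A.2 of the Appendix. Because a 3-connected planar graph has an essentially unique embedding on the sphere (Whitney's theorem), the vertex-rooted generating function $T(x)$ of 3-connected 4-regular planar graphs is obtained from $T_1$ (equivalently, $T_2$) by a specific specialization of $u$ and $v$ that collapses embeddings to graphs, as set up in \cite{NRR19}. Eliminating against this specialization yields a polynomial equation $\Phi(x,T)=0$, whose dominant singularity is a root of the discriminant $\mathrm{Disc}_T \Phi$. The particularly simple explicit value $\tau = (88-12\sqrt{21})/135$ strongly suggests that the critical system degenerates to a quadratic factor, allowing $\tau$ to be identified cleanly. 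Substituting the ansatz $T(x)=a_0+a_1(1-x/\tau)+a_2(1-x/\tau)^{3/2}+\cdots$ into $\Phi$ determines $a_0,a_1,a_2$, from which the transfer theorem delivers the constant $t$.

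For part (b), the passage from 3-connected to 2-connected uses the network decomposition already developed in \cite{NRR19}: a 2-connected 4-regular planar graph is encoded as a series–parallel–polyhedral composition of networks whose 3-connected cores are 3-connected 4-regular planar graphs, yielding a functional system relating $B^\bullet(x)=xB'(x)$ (with $B$ the generating function of 2-connected 4-regular planar graphs) and $T(x)$. Eliminating the network variables against the equation for $T$ from part (a) produces the degree-11 polynomial~\eqref{poly:beta} satisfied by $B^\bullet$. One then identifies $\beta$ as the smallest positive root of~\eqref{poly:beta}, computes the square-root Puiseux expansion of $B^\bullet$ at $\beta$, and reads off the constant $b$ after the standard un-pointing $B(x)=\int B^\bullet(x)\,dx/x$.

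The hard part will be the elimination step in (b) and the correct choice of branch: there are eleven algebraic branches of~\eqref{poly:beta}, and one must isolate the unique branch agreeing with the power series of $B^\bullet$ at $0$ (done by matching initial coefficients against the combinatorial expansion), and then verify that its smallest positive singularity is genuinely $\beta$ rather than a spurious root of the global discriminant coming from another branch. A related subtlety is confirming that the singularity is of exact type $3/2$ and not of higher order, which amounts to checking the non-vanishing of certain partial derivatives of the defining polynomial at $(\beta,B^\bullet(\beta))$; this is done by explicit numerical evaluation in the \texttt{Maple} sessions. Once these checks are in place, the transfer theorem and the explicit Puiseux coefficients produce the numerical values of $t$ and $b$ stated in the theorem.
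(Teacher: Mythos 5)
Your overall plan --- obtain an algebraic equation for a pointed generating function, locate the dominant singularity via a factor of the discriminant, compute the square-root Puiseux expansion, apply the transfer theorem and then un-point by dividing by $n$ --- is exactly the paper's strategy, and your identification of $\tau=(88-12\sqrt{21})/135$ as the root of a quadratic factor $3645x^2-4752x+944$ is spot on.

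However, part (b) contains a concrete misreading. You write that elimination ``produces the degree-11 polynomial~\eqref{poly:beta} satisfied by $B^{\bullet}$'' and then talk about ``eleven algebraic branches of~\eqref{poly:beta}'' and isolating ``the unique branch agreeing with the power series of $B^{\bullet}$ at $0$.'' This is not what~\eqref{poly:beta} is: it is a \emph{univariate} polynomial in $x$ and cannot be ``satisfied'' by $B^{\bullet}$ in the sense of a minimal polynomial. In the paper, elimination from~\eqref{sys:Bdot} produces the bivariate minimal polynomial $P_{B^{\bullet}}(B^{\bullet}(x),x)=\sum_{i=0}^{29} b_i(x)\,(B^{\bullet})^i$, which has degree $29$ in $B^{\bullet}$. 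One then takes its discriminant with respect to $B^{\bullet}$; this discriminant has seven irreducible factors, and~\eqref{poly:beta} is the one of those factors whose smallest positive root is the true singularity $\beta$. So the branch-selection discussion should be carried out for the degree-29 polynomial, not for~\eqref{poly:beta}, and the remaining six discriminant factors must be excluded explicitly (the paper rules them out either as trivially nonvanishing or because their smallest positive root, $\approx 0.013756$, lies below the radius of convergence $\rho_1\approx 0.0367$ of the generating function of all planar graphs).

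A secondary, smaller imprecision in part (a): the specialization from maps to graphs is not a generic ``collapse.'' The paper uses $T^{(1)}(x,u,v)=\tfrac12 T_1(u^2x,vx)$, then sets $v=0$ (so the 3-connected map has no double edges and is hence a simple graph) and $u=1$, and finally uses $4T^{\bullet}(x)=T^{(1)}(x,1,0)$ to pass from edge-rooting to vertex-rooting. Moreover, the discriminant of $P_{T^{\bullet}}$ has five factors, and your quadratic is only one of them; the argument for discarding the others (one with root $\approx 0.0014891$, discarded because it is below $\rho_1$, and one with root $\approx 0.53898$, discarded because it exceeds $1/4$, the singularity of irreducible quadrangulations) is a necessary part of the proof that your sketch omits.
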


The last result deals with the number of simple 4-regular maps. 
The enumeration of (non-necessarily simple) 4-regular maps is rather direct, since they are in bijection with arbitrary maps \cite[Chapter 5]{handbook15}. 
But forbidding loops and multiple edges makes the problem much more challenging.

\begin{theorem}\label{thm:maps}
(a) The number $u_n$ of 4-regular simple maps  with $n$ vertices  is asymptotically
\begin{equation*}
    u_n \sim s \cdot n^{-5/2}\cdot \sigma^{-n},
    \text{with } s \approx 0.016360 \text{ and } \sigma^{-1} \approx 4.13146,
\end{equation*}
where $\sigma \approx 0.24204$ is the smallest positive root of
\begin{equation}\label{poly:sigma}
	432x^8 + 448x^7 - 852x^6 + 588x^5 - 72x^4 - 504x^3 + 135x^2 + 108x - 27 = 0.
\end{equation}
(b) The number $h_n$ of 2-connected 4-regular simple maps is asymptotically
\begin{equation*}
	h_n \sim h \cdot n^{-5/2}\cdot \eta^{-n},  \text{with } h \approx 0.014477\text{ and } \eta^{-1} \approx 4.122915,
\end{equation*}
where $\eta \approx 0.24255$ is the smallest positive root of
\begin{equation}\label{poly:sigma}
    108x^6 + 4x^5 - 136x^4 + 344x^3 - 425x^2 + 196x - 27 = 0.
\end{equation}
\end{theorem}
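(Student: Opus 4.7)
The plan is to parallel the methodology used for Theorems~\ref{thm:main} and~\ref{thm:2-3-conn} but in the simpler setting of rooted maps. Since we enumerate rooted simple maps rather than labelled graphs, the generating functions are ordinary (no $n!$ factor in the asymptotics), and the universal subexponential factor becomes $n^{-5/2}$ rather than $n^{-7/2}$. This reflects the square-root type singularity characteristic of families of rooted planar maps, in contrast with the stronger compositional singularity induced by the exponential formula in the labelled graph setting.

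Starting from the minimal polynomials of $T_1(u,v)$ and $T_2(u,v)$ established in Section~A.2, I would carry out the analogous decomposition along 2- and 3-connected components, but restricted to simple maps, which translates into a specialization of the variables tracking edge multiplicity (killing the contribution of double edges, e.g.\ $v=0$). At the 2-connected level, elimination of the auxiliary variables from the resulting system produces the degree-$6$ minimal polynomial~\eqref{poly:sigma} for $H(x) = \sum h_n x^n$. At the connected level, the standard block (cut-vertex) decomposition, which expresses a connected map as a tree of 2-connected blocks glued at vertices, adapted to the 4-regular setting where the four edges at each cut vertex are split among the incident blocks, yields an additional equation from which the degree-$8$ minimal polynomial for $U(x) = \sum u_n x^n$ follows. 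The sizes of these polynomials are modest enough that direct resultant computations suffice; the Lagrange-interpolation workaround required in the graph case is not needed here.

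With the minimal polynomials in hand, I would identify the dominant singularities $\eta$ and $\sigma$ as the smallest positive real roots of the respective discriminants with respect to the second variable, checking numerical agreement with the stated values. A Puiseux expansion of $H(x)$ around $x=\eta$ should yield
\begin{equation*}
H(x) = H_0 + H_1\, Y + H_2\, Y^{3/2} + O(Y^2), \qquad Y = 1 - x/\eta,
\end{equation*}
with $H_2 \neq 0$, and likewise for $U(x)$ around $x=\sigma$. The transfer theorem of Flajolet--Sedgewick then gives the asymptotic estimates, with the constants read off as $h = -\tfrac{3}{4\sqrt{\pi}}\,H_2$ and $s = -\tfrac{3}{4\sqrt{\pi}}\,U_2$. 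The main technical obstacle is to certify that the smallest positive real root of the discriminant is genuinely the dominant singularity of the relevant analytic branch (rather than a spurious root introduced by algebraic closure), and that the Newton polygon at that singularity forces the exponent $3/2$ rather than a larger half-integer; both verifications can be carried out directly from the explicit polynomials by inspecting all candidate roots numerically and computing the first few terms of the Puiseux series in \texttt{Maple}.
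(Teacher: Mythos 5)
Your high-level plan --- obtain minimal polynomials for $M(x)$ and $N(x)$, locate the dominant singularity among the roots of the discriminant, compute the Puiseux expansion, and apply the transfer theorem --- matches the paper's strategy, and you are right that the exponent $-5/2$ is simply the transfer of the $(1-x/\rho)^{3/2}$ Puiseux term for ordinary generating functions of rooted maps. Two of your intermediate steps, however, are substantively wrong.

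First, setting $v=0$ does not restrict to simple maps. A simple $4$-regular map can perfectly well have $3$-connected Tutte components that contain double edges, because a virtual double edge of the skeleton records a parallel structure that gets expanded into a series or polyhedral network in the ambient simple map. In the paper's systems \eqref{sys:M} and \eqref{sys:N}, the generating function $T_2$ of $3$-connected cores rooted at a double edge appears at full strength, and $v$ is not set to zero: it is fixed by the substitution equation $x^2 v = x^2(2D+D^2) + F$, which replaces each virtual double edge of the core by a network. Putting $v=0$ discards exactly those contributions and yields the wrong generating function.

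Second, the paper does not treat the connected case by a block (cut-vertex) decomposition. Systems \eqref{sys:M} and \eqref{sys:N} have identical structure; the only difference is that \eqref{sys:M} retains the loop-network series $L$ (the terms $-L-L^2$ in the top equation and $L$ in the equation for $D$), while \eqref{sys:N} deletes $L$ because a $2$-connected $4$-regular map has no loops. Deriving the connected count by gluing $2$-connected blocks at cut vertices would require a separate argument (e.g.\ that a cut vertex of a $4$-regular map splits its four edges $2{+}2$ into exactly two blocks, plus a correct translation to \emph{rooted} maps); none of that is needed once $L$ is simply kept in the network system.

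You also misidentify the degree-$8$ and degree-$6$ polynomials displayed in the theorem: they are not minimal polynomials of $U(x)$ and $H(x)$, but univariate factors (in $x$) of the discriminants of the true minimal polynomials $P_M$ and $P_N$, which have degree $4$ in $M$, respectively $N$, with coefficients of much higher degree in $x$ (see Appendix~\ref{app:min-pol-maps}). Finally, a sign slip: with your normalization $H = H_0 + H_1 Y + H_2 Y^{3/2} + O(Y^2)$ and $Y = 1-x/\eta$, Lemma~\ref{lem:analytic} gives $h = +\tfrac{3H_2}{4\sqrt{\pi}}$, not $-\tfrac{3H_2}{4\sqrt{\pi}}$, and similarly for $s$.
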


\paragraph{Remark.}

The constants involved in the previous theorems are polynomial functions in the dominant singularity of their respective generating function with coefficients in $\mathbb{Q}[\pi]$ (this is due to the $\Gamma$ function, see Lemma \ref{lem:analytic} in Section \ref{sec:asymptotic}).
From those polynomials, and using the polynomial equations satisfied by the dominant singularities, one can also write the constants implicitely as solutions of minimal polynomials in $\mathbb{Z}[\pi]$.
At the exception of constant $g$, the minimal polynomial of each constant is given in the corresponding file in the \texttt{Maple} sessions, or in separate files for $b$ and $c$.
In addition, all the polynomials shown are irreducible and their integer coefficients have no common factor.

\medskip

The rest of the paper is organised as follows.
In Section \ref{sec:prelim} we recall the basic definitions on planar graphs and maps, then on algebraic generating functions.
In Sections \ref{sec:quadr} and \ref{sec:3-conn} we recall first the various combinatorial objects introduced in \cite{NRR19} and then the equations satisfied by the associated generated functions.
Then by elimination we find the minimal polynomials of quadrangulations and 3-connected 4-regular maps.
In Section \ref{sec:graphs} we use the results of the previous section to compute minimal polynomials for 4-regular planar graphs and maps.
Finally in Section \ref{sec:asymptotic}, after providing an analytic lemma, we obtain the asymptotic estimates for all the graphs and maps of interest.

%%%%%%%%%%%%%%%%%%%%%%%%%%%%%%%%%%%%%%%%%%%%%%%%%%%%%%%%%%%%%%%%%%%%%%%%%%%%%%%
%
% Preliminaries
%
%%%%%%%%%%%%%%%%%%%%%%%%%%%%%%%%%%%%%%%%%%%%%%%%%%%%%%%%%%%%%%%%%%%%%%%%%%%%%%%
\section{Preliminaries}\label{sec:prelim}

\subsection{Planar graphs and maps}

Throughout the paper, graphs are labelled and maps are unlabelled.
A graph is \emph{planar} if it admits an embedding on the plane without edge-crossings.
A planar map is an embedding of a planar multigraph up to orientation preserving homeomorphisms of the sphere.
It is simple if the underlying graph is simple.
A planar map $M$ is always considered {\em rooted}: an edge $ab$ of $M$ is distinguished and given a direction from $a$ to $b$.
The vertex $a$ is the {\em root vertex} and the face on the right of $ab$ as the {\em root face}.
Any other face is called an {\em inner face} of $M$.
Vertices incident with the root face are called \emph{external} vertices.

A map in which every vertex (resp. face) has degree four is said to be {\em 4-regular} (resp. a {\em quadrangulation}).
By duality, quadrangulations are in bijection with 4-regular maps.
Notice that quadrangulations can have ``degenerate'' faces consisting of a double edge with an isthmus inside.
A quadrangulation with at least eight vertices is {\em irreducible} if every 4-cycle forms the boundary of a face.
Irreducible quadrangulations are known to be in bijection with 3-connected maps (see \cite{BGW02}).

The following concepts are taken from \cite{NRR19} (see also \cite{MS68}).
A \emph{diagonal} in a quadrangulation is a path of length two whose endpoints are external and the central point is internal.
If $uv$ is the root edge, then there are two kinds of diagonals, those incident with $u$ and those incident with $v$.
By planarity both cannot be present at the same time.
A vertex of degree two in a quadrangulation is called \emph{isolated} if it is not adjacent to another vertex of degree two.
An isolated vertex of degree two will be called a \emph{2-vertex}.
By duality, a 2-vertex becomes (in the corresponding 4-regular map) a face of degree two not incident with another face of degree two.
We call it a \emph{2-face}.
Furthermore, we say that an edge is \emph{in} a 2-face if it is on its boundary, and \emph{ordinary} otherwise.
Note that since the number of edges of a 4-regular map is even, so is the number of ordinary edges.

\subsection{Algebraic generating functions}

A power series $f(x)$ is \textit{algebraic} if it satisfies a polynomial equation of the form
\begin{equation*}
	P(f(x), x) = p_k(x)f(x)^k + p_{k-1}(x)f(x)^{k-1} + \cdots + p_1(x)f(x) + p_0(x) = 0,
\end{equation*}
where the $p_i$'s are polynomials in $x$.
If the polynomial $P(y,x)$ is irreducible then it is unique and is called the  minimal polynomial of $f(x)$.
An algebraic  power series $f(x)$ with non-negative coefficients is represented  as a branch of its minimal polynomial $P(y,x)$ in the positive quadrant passing through the origin.
This last condition represents the fact that there is no graph with an empty vertex set.
We call this branch the \emph{combinatorial branch}.
It defines an analytic function in a disk centered at the origin with positive radius of convergence $\rho$.
Since the coefficients of $f(x)$ are non-negative, it holds by Pringsheim's theorem \cite[Theorem IV.6]{FS09} that $\rho$ is a singularity of $f(x)$, called the dominant singularity.
In this paper $\rho$ will always be a branch-point $(f(\rho),\rho)$ of $P(y,x)$, that is, one of the common roots of
\begin{equation*}	
	\frac{\partial P}{\partial y}(y,x)=0, \qquad  P(y,x)=0,
\end{equation*}	
It is a positive root of a factor of the discriminant of $P(y,x)$ with respect to $y$ (see \cite[Section VII.7]{FS09}).
All the algebraic power series  $f(x)$ in this paper  admit a {Puiseux expansion} as $x\to\rho^-$ (i.e. $|x|<\rho$ and $x\to\rho$) of the form
\begin{equation*}
	f(x) = f_0 - f_2\left( 1 - \frac{x}{\rho} \right) + f_3\left( 1- \frac{x}{\rho} \right)^{3/2} + O\left( \left( 1- \frac{x}{\rho} \right)^2 \right),
\end{equation*}
which is the local expansion of $f(x)$ near $\rho^-$ corresponding to the combinatorial branch of $P(y,x)$.
Note that $f_0$, $f_2$ and $f_3$ are algebraic functions in $\rho$ and are thus algebraic constants as $\rho$ also is.
Furthermore, we always have $f_0,f_2,f_3>0$.

Using the Newton polygon algorithm (see \cite[Section 6.3]{KP11}), one can compute any coefficent of this expansion exactly, i.e. in closed form or as root of a given polynomial, in time polynomial in the degree of $f(x)$.
This algorithm has been implemented as the function \texttt{puiseux} in the \texttt{Maple} package \texttt{algcurves} and this is what
we use in Section \ref{sec:asymptotic}.

%%%%%%%%%%%%%%%%%%%%%%%%%%%%%%%%%%%%%%%%%%%%%%%%%%%%%%%%%%%%%%%%%%%%%%%%%%%%%%%
%
% Quadrangulations
%
%%%%%%%%%%%%%%%%%%%%%%%%%%%%%%%%%%%%%%%%%%%%%%%%%%%%%%%%%%%%%%%%%%%%%%%%%%%%%%%
\section{Equations for quadrangulations}\label{sec:quadr}

We follow the combinatorial scheme introduced in \cite[Section 2]{NRR19}, which we summarize here.
Following Mullin and Schellenberg in \cite{MS68}, we partition simple quadrangulations into three families:
\begin{itemize}
    \item[(S1)] The quadrangulation consisting of a single quadrangle.

    \item[(S2)] Quadrangulations containing a diagonal incident with the root vertex.
    By symmetry, they are in bijection with quadrangulations containing a diagonal not incident with the root vertex.
    Each of those two classes can be partitioned into three sub-classes $\cN_i$ for $i = 0,1,2$, according to the number $i$ of external 2-vertices.

    \item[(S3)] Quadrangulations obtained from an irreducible quadrangulation by possibly replacing each internal face with a simple quadrangulation.
    We denote this family by $\cR$.
\end{itemize}

We use simple quadrangulations to obtain generating functions for general quadrangulations encoding $2$-vertices \cite[Section 2.2]{NRR19}.
This is done in two steps.
First, we obtain equations for quadrangulations of the 2-cycle (see \cite[Lemma 2.3]{NRR19}).
We denote by $\cA = \cA_0 \cup \cA_1$ the quadrangulations of a 2-cycle, where $\cA_1$ are those whose root vertex is a 2-vertex (by symmetry, they are in bijection with those in which the other external vertex is a 2-vertex), and $\cA_0$ are those without external 2-vertices.

Finally we obtain equations for arbitrary quadrangulations $\cB$.
We decompose $\cB = \cB_0 \cup \cB^*_0 \cup \cB_1$, where $\cB_1 $ are those in which the root edge is incident with exactly one 2-vertex, and $\cB_0 \cup \cB^*_0$ are those in which the root edge is not incident with a 2-vertex.
Furthermore, $\cB_0^*$ are the quadrangulations obtained by replacing one of the two edges incident with the root edge in the single quadrangle with a quadrangulation of the 2-cycle.

\paragraph{Irreducible quadrangulations.}

We use \cite[Equation (9)]{BGW02}.
Let $s_n$ be the number of irreducible quadrangulations with $n$ inner faces.
Then the associated generating function $S(y) = \sum_{n\ge 0} s_n y^n$ satisfies the following implicit system of rational equations
\begin{align*}
    S(y) = \ds\frac{2y}{1 + y} - y - \frac{U(y)^2}{y(1 + 2U(y))^3}, \qquad
    U(y) = y(1 + U(y))^2.
\end{align*}

Eliminating $U(y)$ from the above system and factorising gives us a polynomial satisfied by $S(y)$.
Then by expanding the roots of each factor in series of $y$ near zero, one can check that the minimal polynomial of $R(y)$ is given by
\begin{equation}\label{poly:R}
    \begin{split}
        P_S(S(y),y) = \,\,
        & (y^5 + 8y^4 + 25y^3 + 38y^2 + 28y + 8)S(y)^2 \\
        & + \, (2y^6 + 12y^5 + 20y^4 + 10y^3 - 5y^2 - 4y + 1)S(y) \\
        & + \, (y^7 + 4y^6 - y^5) = 0.
    \end{split}
\end{equation}

\paragraph{From irreducible quadrangulations to simple quadrangulations.}

We use variables $s$ and $t$ to mark inner faces and $2$-vertices, respectively.
We write $N_i = N_i(s,t)$ ($i=0,1,2$) for the generating function of the subclass $\cN_i$ counting both the number of inner faces and 2-vertices.
We denote by $R = R(s,t)$ the generating function associated to~$\cR$.
By rewriting the system (1) in \cite{NRR19} using these variables we get:
\begin{equation}\label{sys:RN}
    \begin{array}{rll}
		y & = & s + 2\widetilde{N} + S(y), \\
		t^2\widetilde{N} & = & t^2N_0 + tN_1 + N_2, \\
		tN_0 & = &(\widetilde{N} + R) \left(t(\widetilde{N} + R + N_0) + \frac{N_1}{2}\right), \\
 		N_1 & = & 2st\left(\widetilde{N} + R + N_0 + \frac{N_1}{2}\right), \\
		N_2 & = & s^2t^3 + st\left(  \frac{N_1}{2}+N_2\right),
    \end{array}
\end{equation}
where $y$ is a function of $s$ and $t$.

\paragraph{From simple quadrangulations to general quadrangulations.}

We use $z$ and $w$ to mark inner faces and $2$-vertices, respectively.
In the following system of equations, variables $s$ and $t$ are considered as functions of $z$ and $w$.
As in \eqref{sys:RN}, we write $N_i = N_i(s,t)$ ($i=0,1,2$).
We denote by $A_j = A_j(s,t)$ the generating function of the family $\cA_j$ ($j=0,1$).

The equations in this context are as follows; see the details in \cite{NRR19}. In particular, the topmost equation in page 365, together with Equation (2) and the equations in Lemmas 2.3 and 2.4:
\begin{equation}\label{sys:A0}
    \begin{array}{rll}
    	s &=& z(1 + \widetilde{A})^2, \\
    	t(1 + \widetilde{A})^2 &=& w + 2\widetilde{A} + \widetilde{A}^2, \\
   		Q_0 &=& s(2N_0 + N_1+ S(y)) + (2\widetilde{A} + \widetilde{A}^2)Q_1, \\
    	tQ_1 &=& N_1 + 2N_2, \\
    	E &=& z(1 + \widetilde{A})^4 - 4z\widetilde{A}^2  + 4zw\widetilde{A}^2, \\
    	w\widetilde{A} &=& wA_0 + 2A_1, \\
    	w\widehat{A} &=& wA_0 +(1+w)A_1, \\
    	A_0 &=& 2z\widetilde{A}(1 + \widehat{A}) \\
    	&&+ z(Q_0 + Q_1 + E + 2z(w - 1)\widetilde{A} + 2z(1 - w)\widetilde{A}^2), \\
    	A_1 &=& zw(1 + \widehat{A}).
    \end{array}
\end{equation}
Then (see Lemma 2.5. in \cite{NRR19}) we have that
\begin{align}
	& B_0 = 2z(1 + \widehat{A})(1 + \widehat{A} - A_1) + z(Q_0 + E - 2zw\widetilde{A}^2  - 2z\widetilde{A}), \label{eq:B0} \\
	& B_1 = 2z(1 + \widehat{A})A_1 + zw(Q_1 + 2z\widetilde{A}^2), \label{eq:B1} \\
	& B_0^* = 2z^2\widetilde{A}. \label{eq:B0star}
\end{align}

We define next three systems of algebraic equations:
\begin{equation*}	
	{\mathbf S}_0 = \eqref{sys:RN} \cup \eqref{sys:A0} \cup \eqref{eq:B0},
	\qquad  {\mathbf S}_1 = \eqref{sys:RN} \cup \eqref{sys:A0} \cup \eqref{eq:B1}, \qquad
  	{\mathbf S}_0^* = \eqref{sys:RN} \cup \eqref{sys:A0} \cup \eqref{eq:B0star}.
\end{equation*}	
Each of these systems is composed of sixteen equations, eighteen variables, and is strongly connected.
By algebraic elimination (using the \texttt{Maple}  function \texttt{Groebner}), one can  obtain a unique polynomial equation in any three chosen variables.
We  obtain the following three polynomials which are respectively of degree 2 in $B_0$, $B_1$ and $B_0^\ast$
\begin{align}
	& P_{B_0}(B_0,z,w) =p_{0,0}(z,w)+p_{1,0}(z,w)B_0+p_{2,0}(z,w)B_0^2,\label{poly:B0} \\
	& P_{B_1}(B_1,z,w) =p_{0,1}(z,w)+p_{1,1}(z,w)B_1+p_{2,1}(z,w)B_1^2,\label{poly:B1} \\
	& P_{B_0^*}(B_0^*,z,w) =p_{0,2}(z,w)+p_{1,2}(z,w)B_0^\ast+p_{2,2}(z,w)(B_0^\ast)^2,\label{poly:B0star}
\end{align}
where each coefficient $p_{i,j}(z,w),\,p_{i,j}^\ast(z,w)$ is a bivariate polynomial given in Appendix \ref{app:min-pol-B}.
These polynomials are obtained after the elimination process by factoring the resulting polynomials and choosing the right factor in each case (in all cases we get only one candidate with non-negative integer coefficients).

%%%%%%%%%%%%%%%%%%%%%%%%%%%%%%%%%%%%%%%%%%%%%%%%%%%%%%%%%%%%%%%%%%%%%%%%%%%%%%%
%
% Equations for 3-connected 4-regular maps
%
%%%%%%%%%%%%%%%%%%%%%%%%%%%%%%%%%%%%%%%%%%%%%%%%%%%%%%%%%%%%%%%%%%%%%%%%%%%%%%%
\section{Equations for 3-connected 4-regular maps}\label{sec:3-conn}

We shortly describe how to obtain a combinatorial decomposition scheme in order to deduce equations for 3-connected 4-regular maps.
More details are given in \cite[Section 3]{NRR19}.

The class $\cM$ of $4$-regular maps can be decomposed into $\cM_0 \cup \cM^*_0\cup \cM_1 $, where $\cM_0 \cup \cM_0^*$ are 4-regular maps in which the root edge is not incident with a 2-face, $\cM_1$ are those for which the root edge is incident with exactly one 2-face, and $\cM_0^*$ are maps in which the root is one of the outer edges of a triple edge.
These classes are in bijection with the classes $\cB_0$, $\cB^*_0$ and $\cB_1$ from the previous section, as the dual of a quadrangulation with $\ell$ 2-vertices is a 4-regular map with $\ell$ 2-faces.

We denote by $M_0(q,w)=M_0$, $M_1(q,w)=M_1$ and $M_1^{\ast}(q,w)=M_0^{\ast}$ the associated generating functions, where variables $w$ and $q$ mark 2-faces and ordinary edges, respectively.
Observe that when setting $w = q$, one recovers the enumeration of 4-regular maps according to half the number of edges.
Due to the bijection with quadrangulations it follows that
\begin{align}\label{eq: M's}
    M_0(q,w) = B_0(q,w/q), &
    & M_1(q,w) = B_1(q,w/q), &
    & M_0^*(q,w) = B_0^*(q,w/q).
\end{align}

The next step is to decompose the previous classes.
Given a map $M$, let $M^-$ be the map obtained by removing the root edge $st$, whose endpoints $s,t$ are called the \emph{poles} of $M$.
As shown in \cite[Lemma 3.1]{NRR19} we have
\begin{equation*}
	\cM_0 = \cL \cup \cS_0\cup \cP_0\cup \cH, \qquad  \cM_1 = \cS_1 \cup \cP_1\cup \cF\cup \overline{\cF},
\end{equation*}
where
\begin{itemize}
	\item $\cL$ are maps in which the root-edge is a loop.

	\item $\cS = \cS_0 \cup \cS_1$ are \emph{series} maps: $M^-$ is connected and there is an edge in $M^-$ that separates the poles.
	The index $i=0,1$ refers to the number of 2-faces incident with the root edge.
	
	\item $\cP = \cP_0 \cup \cP_1$ are \emph{parallel} maps: $M^-$ is connected, there is no edge in $M^-$ separating the poles, and either $st$ is an edge of $M^-$ or $M - \{s,t\}$ is disconnected.
	The index $i=0,1$ has the same meaning as in the previous class.

	\item $\cH$ are \emph{polyhedral} maps: they are obtained by considering a 3-connected 4-regular map $C$ (called the {\em core}) rooted at a simple edge and possibly replacing every non-root edge of $C$ with a map in $\cM$.
	
	\item $\cF$ (resp. $\overline{\cF}$)  are maps $M$ such that the face to the right (resp. to the left) of the root-edge is a 2-face, and such that $M - \{s,t\}$ is connected.
\end{itemize}

Let $L$, $S_0$, $S_1$, $P_0$, $P_1$, $F$ and $\overline{F}$ be the generating functions of the corresponding families each in terms of the variables $q$ and $w$.
In the case of  $S_1$ and $P_1$, we count the number of 2-faces minus one (instead of the total number of 2-faces) and the number of ordinary edges plus two.
In both $F$ and $\overline{F}$ we count the number of 2-faces minus one and the number of ordinary edges (and in particular $F=\overline{F}$ by symmetry).

Finally, $\cT_1$ and $\cT_2$ are the classes of 3-connected 4-regular maps rooted at a simple and at a double edge, respectively.
We denote by $T_1(u,v)$ and $T_2(u,v)$ the corresponding generating functions, where $u$ and $v$ respectively mark ordinary edges and 2-faces.
The main purpose of this section is to obtain the minimal polynomials for both $T_1$ and $T_2$.

\paragraph{The system for $T_1(u,v)$.}

The following is the system $(5)$ from \cite[Lemma 3.2]{NRR19}.
We include Equations \eqref{poly:B0}-\eqref{poly:B0star} which define implicitly $M_0$, $M_1$ and $M_0^\ast$ in terms of $q$ and $w$.

\begin{equation}\label{sys:T1}
    \begin{array}{rll}
    	(1 + D)H &=& T_1(u,v), \\
    	u &=& q(1 + D)^2, \\
    	v &=& w + q(2D + D^2 + F), \\
   		M_0 &=& S_0 + P_0 + L + H , \\
    	qM_1 &=& w(S_1 + P_1 + 2qF), \\
    	M_0^* &=& 2q^2D, \\
    	L &=& 2q(1 + D - L) + L(w+q)L, \\
    	S_0 &=& D(D - S_0 - S_1) - L^2/2, \\
    	S_1 &=& L^2/2, \\
    	P_0 &=& q^2(1 + D + D^2 + D^3) + 2qDF, \\
    	P_1 &=& 2q^2D^2, \\
    	0 &=& P_{B_0}(M_0,q,w/q), \\
    	0 &=& P_{B_1}(M_1,q,w/q), \\
    	0 &=& P_{B_0^\ast}(M_0^*,q,w/q).
    \end{array}
\end{equation}
Observe that all the generating functions (including $u$ and $v$) are functions of $q$ and $w$.

\paragraph{The system for $T_2(u,v)$.}

For maps in $\cH$ the root of the core  is a simple edge,  hence we need to modify slightly the system of equations \eqref{sys:T1} in order to get an equation for $T_2(u,v)$.
We adapt \cite[Lemma 4.1]{NRR19} to the map setting and obtain
\begin{equation*}
	\cF = \cS_2 \cup \cH_2,
\end{equation*}
where $\cS_2$ are networks in $\cF$ such that after removing the two poles there is a cut vertex, while $\cH_2$ are networks in $\cF$ whose root edge is incident to a 2-face.
We denote by $S_2$ and $H_2$ the corresponding generating functions.

Equations for $F$, $S_2$ and $H_2$ are deduced in \cite[Equation $(7)$]{NRR19}.
Combining them with the decomposition explained for $T_1$ we get the following system of equations.
We observe that here the minimal polynomial of $B_0$ it is not needed.
Similarly to \eqref{sys:T1}, all the  generating functions involved in \eqref{sys:T2} depend on $q$ and $w$ but we omit to write the arguments.
\begin{equation}\label{sys:T2}
    \begin{array}{rll}
    	vH_2 &=& T_2(u,v), \\
        u &=& q(1 + D)^2, \\
        v &=& w + q(2D + D^2 )+ F, \\    	
        qM_1 &=& w(S_1 + P_1 + 2qF), \\
        M_0^\ast &=& 2q^2D, \\
        L &=& 2q(1 + D - L) + (w+q)L,\\
        S_0 &=& D(D - S_0 - S_1) - L^2/2, \\
        S_1 &=& L^2/2, \\
        P_0 &=&  q^2(1 + D + D^2 + D^3) + 2qDF, \\
        P_1 &=& 2q^2D^2, \\
        F &=& S_2 + H_2, \\
        S_2 &=& (w + q(2D + D^2) + F)(w + q(2D + D^2) + F - S_2), \\
        0 &=& P_{B_1}(M_1,q,w/q), \\
        0 &=& P_{B_0^\ast}(M_0^*,q,w/q).
    \end{array}
\end{equation}

\subsection{The minimal polynomials of $T_1$ and $T_2$}

The next step is to compute the minimal polynomials $P_{T_1}(T_1,u,v)$ and $P_{T_2}(T_2,u,v)$ defining implicitly $T_1$ and $T_2$ as functions of $u$ and $v$.
In what follows, we present the method used to obtain $P_{T_1}$, which is based on evaluation and interpolation.
The same method is then used to compute $P_{T_2}$.

\paragraph{Evaluation and interpolation for $P_{T_1}$.}

First, from  \eqref{sys:T1} one can eliminate variables $M_0$, $M_1$, $M_0^*$, $S_0$, $S_1$, $P_0$, $P_1$, $D$, $H$ and $F$ and obtain  irreducible polynomial equations in the corresponding variables:
\begin{align*}
    & Q_{T_1}(T_1,q,w) = 0, &
    & Q_u(u,q,w) = 0, &
    & Q_v(v,q,w) = 0.
\end{align*}
Notice that these equations define implicitly $T_1$, $u$ and $v$ as functions of $q$ and $w$.

From there we compute the resultant of $Q_u$ and $Q_v$ with respect to $w$ and find its unique \textit{combinatorial factor} $Q_1(u,v,q)$, that is, the one whose Taylor expansion at $q=0$ has non-negative integer coefficients.
The polynomial $Q_1$ has degree 10 in both $u$ and $v$, and 16 in $q$.
We  compute similarly $Q_2(u,T_1,q)$, the unique combinatorial factor of the resultant of $Q_{T_1}$ and $Q_v$ with respect to $w$.
It has degree 10 in $u$, 20 in $T_1$, and 16 in $q$.
This gives the system:
\begin{equation}\label{sys:eval_interp}
	Q_1(u,v,q) = 0, \qquad
    Q_2(T_1,u,q) = 0.
\end{equation}
If we could now compute directly the resultant of $Q_1$ and $Q_2$ with respect to $q$, this would lead to a polynomial equation $R(T_1,u,v) = 0$ having $P_{T_1}$ as one of its factors, and we would be done.
However, this seems to require too much computing time, even for a relatively powerful computer\footnote{The \texttt{Maple} computation ran for about a week on a machine Intel(R) Xeon(R) CPU E5-2687W v4 @ 3.30GHz$\times$48.}.
Both $Q_1$ and $Q_2$ are dense and their coefficients in $q$ are bivariate polynomials in $\mathbb{Z}[u,v]$ and $\mathbb{Z}[u,T_1]$, respectively, of high total degree.
Instead we proceed indirectly using evaluation and polynomial interpolation.

Let $d$ be the degree of $v$ in $P_{T_1}$.
We will evaluate the system \eqref{sys:eval_interp} at $d+1$ different values $v=k_0,k_1,\ldots,k_d$ and compute the resultant with respect to $q$ of each evaluation.
For any fixed and sufficiently small integer $k>0$, we can compute effectively the resultant of $Q_1(u,k,q)$ and $Q_2(T_1,u,q)$ with respect to $q$.
Notice that if the leading coefficient of $Q_1(u,v,q)$ is not divisible by $(v-k)$, then this resultant is precisely $R(T_1,u,k)$.
Expanding each factor of $R(T_1,u,k)$ at $u=0$ allows us to find the combinatorial one\footnote{
    In this case, the coefficients of $T_1(u)$ count 3-connected 4-regular planar graphs rooted at a simple edge and whose double edges are weighted by $k$.
    The reason why we chose to evaluate at $v$ instead of $u$, or why we do not proceed by nested evaluation and interpolation, e.g. at $v$ then at $u$, is to guarantee the existence of such a combinatorial factor at each evaluation, which has in practice much smaller degrees than the resultant.
}, denoted by $F_k(T_1,u)$.
Following this process we obtain $d+1$ polynomials $F_{k_0},\ldots, F_{k_d}$ in $\mathbb{Z}[T_1,u]$.
Finally interpolating those $d+1$ points yields a unique polynomial in $\mathbb{Z}[T_1,u][v]\simeq \mathbb{Z}[T_1,u,v]$ of degree $d$ in $v$.
This polynomial is exactly $P_{T_1}(T_1,u,v)$, because for each $k = k_0, \ldots, k_d$ the coefficients of $F_k(T_1,u)$ are integers with no common divisor.

We now derive an effective upper bound for $d$.
The resultant $R(T_1,u,v)$ can be seen as the determinant of the {Sylvester matrix} associated with the system \eqref{sys:eval_interp}.
This matrix has size 32, the sum of the degrees of $q$ in $Q_1$ and $Q_2$.
By construction, the coefficients in the first 16 columns are polynomials in $\mathbb{Z}[u,v]$, each of degree at most 10 in $v$, while in the rest of the columns they are polynomials in $\mathbb{Z}[T_1,u]$.
Each monomial of $R$ is hence a product of 32 bivariate polynomials, exactly 16 of which contain a term in $v$ and of degree at most 10.
Thus as a factor of $R$, $P_{T_1}$ has degree at most $160$ in $v$.

After proceeding by evaluation and interpolation for $v = 1, 2,\ldots, 161$, checking at every step that the leading coefficient in $q$ of $Q_1(u,v,q)$ is not divisible by $v-k$, we obtain $P_{T_1}$.
It is monic and has degree 8 in $T_1$, 16 in $u$ and 8 in $v$, as follows:
\begin{equation}\label{poly:T1}
    P_{T_1}(T_1(u,v),u,v) = \sum_{i=0}^8 t_{i,1}(u,v)\cdot T_1(u,v)^i,
\end{equation}
where the $t_{i,1}(u,v)$ are polynomials in $u$ and $v$ given in Appendix \ref{app:min-pol-T1T2}.

\paragraph{Evaluation and interpolation for $P_{T_2}$.}

As argued in \cite[Section 4]{NRR19}, the generating function $T_2(u,v)$ is algebraic and satisfies a minimal polynomial equation denoted by $P_{T_2}(T_2,u,v) = 0$.
We will now show that $P_{T_2}$ has in fact degree at most 8 in $T_2$.
Recall first that $(\partial/\partial u) T_2(u,v) = (\partial/\partial v) T_1(u,v)$.
Second, because $P_{T_1}$ is monic and of degree 8 in $T_1$ then $P_{T_1,v}$ the derivative of $P_{T_1}$ with respect to $v$ has degree 7 in $T_1$ and is linear in $(\partial/\partial v) T_1$.
The minimal polynomial of $(\partial/\partial v) T_1$ is then a factor of the resultant of $P_{T_1}$ and $P_{T_1,v}$ with respect to $T_1$, whose degree in $(\partial/\partial v) T_1$ is at most seven\footnote{This can be seen by considering the resultant as the determinant of the Sylvester matrix associated to the system.}.
This means that $(\partial/\partial u) T_2$ is algebraic of degree at most 7, and hence $P_{T_2}(T_2,u,v)$ has degree most 8 in $T_2$.

We now proceed similarly to $T_1$ in order to compute $P_{T_2}$.
First, we eliminate from \eqref{sys:T2} to obtain an equation $Q_{T_2}(T_2,q,w) = 0$ that defines $T_2$ as a function of $q$ and $w$.
Notice that the equations for $u$ and for $v$ are exactly the same in both systems \eqref{sys:T1} and \eqref{sys:T2}, so that the two equations $Q_u = 0$ and $Q_v = 0$ computed for $T_1$ are the same, as well as the combinatorial factor $Q_1$ of their resultant with respect to $w$.
Then we compute $Q_3$, the combinatorial factor of the resultant of $Q_{T_2}$ and $Q_v$ with respect to $w$.
It has degree 10 in $T_2$, 27 in $v$ and 16 in $q$.

Notice that the required number of evaluations $v=1,2,\ldots$ of the system $\{Q_1,Q_3\}$ is at most $433$, while for the evaluations $u=1,2,\ldots$ we would need at most $161$.
We thus opt for the evaluations $u=k$, with $k=1,2,\ldots,161$, as follows.
We first verify that the leading coefficient of $q$ in $Q_1(u,v,q)$ is not divisible by $u-k$, and then compute $R_k(T_2,v)$ the resultant of the evaluation of $\{Q_1,Q_3\}$ at $u=k$.
It has five different factors.
Three of which cannot equal to zero.
And among the remaining two, one has degree 144 in $T_2$, and one has degree 8.
But because $P_{T_2}(T_2,u,v)$ has degree at most 8 in $T_2$, so does any of its evaluation at $u=1,\ldots,161$.
Thus $F_k(T_2,v)$, the combinatorial factor of $R_k(T_2,v)$ is the one of degree 8 in $T_2$.
$P_{T_2}$ is finally given as the interpolation of the $F_k(T_2,v)$'s, for $k=1,\ldots,161$.
It has degree 8 in $T_2$, 8 in $u$ and 13 in $v$, as follows:
\begin{equation}\label{poly:T2}
	P_{T_2}(T_2(u,v),u,v) = \sum_{i=0}^8 t_{i,2}(u,v)\cdot T_2(u,v)^i,
\end{equation}
where the $t_{i,2}(u,v)$'s are given in Appendix \ref{app:min-pol-T1T2}.\footnote{
    The two evaluation-interpolations for $T_1$ and $T_2$ took in total around 22 hours and 50 minutes to run on \texttt{Maple 2021} with a personal computer (8Go DDR3 RAM, Intel(R) Core(TM) i5-3550 CPU @ 3.30GHz$\times$4), by using the libary \texttt{CurveFitting} and the function \texttt{PolynomialInterpolation}.
	Both are included in the accompanying \texttt{Maple} sessions.
	In order to avoid any uncertainty due to the use randomness in \texttt{Maple}, all computations were done setting the environmental variable \texttt{\_EnvProbabilistic} to zero at the begining of each of the \texttt{Maple} sessions (this does not, it seems, have a significant impact on the execution time).
}

%%%%%%%%%%%%%%%%%%%%%%%%%%%%%%%%%%%%%%%%%%%%%%%%%%%%%%%%%%%%%%%%%%%%%%%%%%%%%%%
%
% Counting 4-regular planar graphs and simple maps
%
%%%%%%%%%%%%%%%%%%%%%%%%%%%%%%%%%%%%%%%%%%%%%%%%%%%%%%%%%%%%%%%%%%%%%%%%%%%%%%%
\section{Counting 4-regular planar graphs and simple maps}\label{sec:graphs}

The final step is to adapt the equations introduced in the previous section for graphs instead of maps. We follow the definitions and notation of \cite[Section 4]{NRR19}.

A \emph{network} is a connected 4-regular multigraph $G$ with an ordered pair of adjacent vertices $(s, t)$ such that the graph obtained by removing the edge $st$ is simple.
Vertices $s$ and $t$ are called the \emph{poles} of the network.
We  define several classes of networks, similar to the classes of maps introduced in the previous section.
We use the same letters, but they now represent classes of labelled \emph{graphs} instead  of maps.
No confusion should arise since in this section we deal only with graphs.
\begin{itemize}
    \item $\cD$ is the class of all networks.

    \item $\cL, \cS, \cP$ correspond as before to loop, series and parallel networks. We do not need to distinguish between $\cS_0$ and $\cS_1$ and between $\cP_0$ and $\cP_1$.

    \item $\cF$ is the class of networks in which the root edge has multiplicity exactly two and removing the poles does not disconnect the graph.

    \item $\cS_2$ are networks in $\cF$ such that after removing the two poles there is a cut vertex.

    \item $\cH = \cH_1 \cup \cH_2$ are $h$-networks: in $\cH_1$ the root edge is simple and in $\cH_2$ it is double.
\end{itemize}
The generating functions of networks are of the exponential type in the variable $x$ marking vertices.
We use letters $D$, $L$, $S$, $P$, $F$, $H_1$ and $H_2$ to denote the EGFs associated to the corresponding network class.

We next define the  generating functions $T^{(i)}(x,u,v)$ of 3-connected 4-regular planar multigraphs rooted at a directed edge, where $i=1,2$ indicates the multiplicity of the root, $x$ marks vertices and $u,v$ mark, respectively, half the number of simple edges and  double edges.
They are easily obtained from the generating functions of 3-connected 4-regular maps computed in the previous section, as follows:
\begin{align}\label{eq:TT}
    & T^{(i)}(x,u,v) = \frac{1}{2}T_i(u^2x,vx),
    & i\in \{1,2\}.
\end{align}
where the division by two encodes the choice of the root face.

\subsection{Connected 4-regular planar graphs}

The following equations are from \cite[Lemma 4.2]{NRR19}.
We denote by $C^{\bullet}(x) = xC'(x)$ the exponential generating function of connected 4-regular planar graphs rooted at a vertex.
\begin{equation}\label{sys:Cdot}
    \begin{array}{rll}
        4C^{\bullet} &=& D - L - L^2 - F  - x^2D^2/2, \\
        D &=& L + S + P + H_1 + F, \\
        L &=& \frac{x}{2}(D - L), \\
        S &=& (D - S)D, \\
        P &=& x^2\left(D^2/2 + D^3/6\right) + FD, \\
        F &=& S_2 + H_2, \\
        xS_2 &=& x^2v(x^2v - S_2), \\
        uH_1 &=& T^{(1)}, \\
        vH_2 &=& T^{(2)}, \\
        u &=& 1 + D, \\
        2x^2v &=& x^2(2D+ D^2) + 2F, \\
        0 &=& P_{T_1}\left(\frac{1}{2}T^{(1)},u^2x,vx\right), \\
        0 &=& P_{T_2}\left(\frac{1}{2}T^{(2)},u^2x,vx\right).
    \end{array}
\end{equation}

\subsection{2-connected 4-regular planar graphs}

Equations for 2-connected 4-regular planar graphs are very similar, they only differ in the fact that networks rooted at a loop will not appear in the recursive decomposition of a graph.
Hence, we just need to remove networks in $\cL$ from the equations.
Let $B^{\bullet}(x) = xB'(x)$ be the EGF of 2-connected 4-regular planar graphs rooted at a vertex, where again variable $x$ marks vertices.
The system of equations defining $B^{\bullet}(x)$ is given by
\begin{equation}\label{sys:Bdot}
    \begin{array}{rll}
        4B^{\bullet} &=& (D - F)  - x^2D(x)^2/2, \\
        D &=& S + P + H_1 + F, \\
        S &=& (D - S)D, \\
        P &=& x^2\left(D^2/2 + D^3/6\right) + FD, \\
        F &=& S_2 + H_2, \\
        xS_2 &=& x^2v(x^2v - S_2), \\
        uH_1 &=& T^{(1)}, \\
        vH_2 &=& T^{(2)}, \\
        u &=& 1 + D, \\
        2x^2v &=& x^2(2D+ D^2) + 2F, \\
        0 &=& P_{T_1}\left(\frac{1}{2}T^{(1)},u^2x,vx\right), \\
        0 &=& P_{T_2}\left(\frac{1}{2}T^{(2)},u^2x,vx\right).
    \end{array}
\end{equation}

\subsection{Simple 4-regular planar maps}\label{sec:simple-4regMap}

Let $M(x)$ be the (ordinary) generating function of 4-regular simple maps, where the variable $x$ marks vertices (vertices in maps are unlabelled).
As shown in \cite[Lemma 5.1]{NRR19}, $M(x)$ satisfies the following system of equations:
\begin{equation}\label{sys:M}
    \begin{array}{rll}
        M &=& D - L - L^2 - 3x^2D^2 - 2F, \\
        D &=& L + S + P + H_1 + 2F, \\
        L &=& 2x(D - L), \\
        S &=& D(D - S), \\
        P &=& x^2(3D^2 + D^3) + 2FD, \\
        F &=& S_2 + H_2/2, \\
        xS_2 &=& \left(x^2(2D + D^2) + F\right)(x^2(2D + D^2) + F - S_2), \\
        uH_1 &=& T_{1}, \\
        vH_2 &=& T_{2}, \\
        u &=& 1 + D, \\
        x^2v &=& x^2(2D + D^2) + F, \\
    	0 & = & P_{T_1}\left(T^{(1)},u^2x,vx\right), \\
        0 & = & P_{T_2}\left(T^{(2)},u^2x,vx\right).
    \end{array}
\end{equation}

\subsection{2-connected simple 4-regular planar maps}

Let $N(x)$ be the generating function counting 2-connected 4-regular simple maps.
This case was not considered in \cite{NRR19}, however it follows easily by removing loop networks in the previous system and we obtain
\begin{equation}\label{sys:N}
    \begin{array}{rll}
        N &=& D -3x^2D^2 - 2F, \\
        D &=& S + P + H_1 + 2F, \\
        S &=& D(D - S), \\
        P &=& x^2(3D^2 + D^3) + 2FD, \\
        F &=& S_2 + H_2/2, \\
        xS_2 &=& \left(x^2(2D + D^2) + F\right)(x^2(2D + D^2) + F - S_2), \\
        uH_1 &=& T_{1}, \\
        vH_2 &=& T_{2}, \\
        u &=& 1 + D, \\
        x^2v &=& x^2(2D + D^2) + F, \\
    	0 & = & P_{T_1}\left(T^{(1)},u^2x,vx\right), \\
        0 & = & P_{T_2}\left(T^{(2)},u^2x,vx\right).
    \end{array}
\end{equation}

%%%%%%%%%%%%%%%%%%%%%%%%%%%%%%%%%%%%%%
%
% Asymptotic analysis
%
%%%%%%%%%%%%%%%%%%%%%%%%%%%%%%%%%%%%%%
\section{Asymptotic enumeration} \label{sec:asymptotic}

In this last section, we prove Theorems \ref{thm:main}, \ref{thm:2-3-conn} and \ref{thm:maps}.
For the sake of clarity, we omit certain computational details in the proof of Theorem \ref{thm:maps}, which can be found in the accompanying \texttt{Maple} sessions.
We first need the following analytic lemma.

\begin{lemma}\label{lem:analytic}
	Let $f(x)$ be an algebraic generating function with non-negative coefficients such that $f(0)=0$.
	Further assume that $f(x)$ admits a unique dominant singularity $\rho$ in the circle boarding the disk of convergence, and a Puiseux expansion as $x\to\rho^-$ of the form:
	\begin{equation}\label{puis:f}
	   f(x) = f_0 - f_2\left( 1- \frac{x}{\rho} \right) + f_3\left( 1- \frac{x}{\rho} \right)^{3/2} + O\left( \left( 1- \frac{x}{\rho} \right)^2 \right),
	\end{equation}
	with $f_0,f_2,f_3>0$.
	Then the coefficients of $f(x)$ verify the  asymptotic estimate
	\begin{equation*}
		[x^n]f(x)\sim \frac{3f_3}{4\sqrt{\pi}}\cdot n^{-5/2}\cdot \rho^{-n}, \qquad  \hbox{ as } {n\to\infty}.
	\end{equation*}	
\end{lemma}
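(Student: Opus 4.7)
The statement is a routine application of singularity analysis à la Flajolet--Odlyzko; the only thing to check is that the hypotheses line up with the standard transfer theorem, and then to compute the correct constant from the $(1-x/\rho)^{3/2}$ term. I would proceed as follows.

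\medskip

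\noindent\textbf{Step 1: Analytic continuation in a $\Delta$-domain.} Since $f(x)$ is algebraic, it is analytic everywhere except at finitely many algebraic singularities (branch points and poles of its minimal polynomial). By the uniqueness assumption, $\rho$ is the only singularity on the circle $|x|=\rho$, so there exists $\eta>0$ and $0<\phi<\pi/2$ such that $f$ continues analytically to the $\Delta$-domain
\begin{equation*}
\Delta=\Delta(\phi,\eta)=\{x\in\mathbb{C}:|x|<\rho+\eta,\ x\neq\rho,\ |\arg(1-x/\rho)|>\phi\}.
\end{equation*}
This is the standard setup required to invoke the transfer theorem (see \cite[Chapter VI]{FS09}).

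\medskip

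\noindent\textbf{Step 2: Term-by-term transfer.} The Puiseux expansion \eqref{puis:f} is valid as $x\to\rho^-$ inside $\Delta$. I would apply the transfer theorem separately to each summand. The constant $f_0$ and the linear term $f_2(1-x/\rho)$ are polynomials in $x$, so they only affect finitely many coefficients and contribute nothing to the asymptotics. The error term transfers to $O(n^{-3}\rho^{-n})$. The dominant contribution comes from $f_3(1-x/\rho)^{3/2}$.

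\medskip

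\noindent\textbf{Step 3: Computing the constant.} The standard identity
\begin{equation*}
[x^n](1-x/\rho)^{\alpha}\sim\frac{n^{-\alpha-1}}{\Gamma(-\alpha)}\,\rho^{-n},\qquad n\to\infty,
\end{equation*}
applied with $\alpha=3/2$ yields $\Gamma(-3/2)=\tfrac{4\sqrt{\pi}}{3}$, so that
\begin{equation*}
[x^n]\,f_3\bigl(1-x/\rho\bigr)^{3/2}\sim\frac{3f_3}{4\sqrt{\pi}}\cdot n^{-5/2}\cdot\rho^{-n}.
\end{equation*}
Combining with Step 2 (which provides errors of order $n^{-3}\rho^{-n}$) gives the claimed asymptotic.

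\medskip

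\noindent\textbf{Main potential obstacle.} There is essentially no deep obstacle: the heart of the proof is the verification that $f$ continues to a suitable $\Delta$-domain, and this is automatic for algebraic functions with a single dominant singularity. The only place where one must be careful is if one wanted a fully explicit uniform error bound; here the statement only asks for the leading asymptotic, so the standard transfer theorem suffices without further work. Hence the lemma is really a packaging statement, assembling the known singularity analysis machinery under the specific Puiseux shape that will recur in the proofs of Theorems \ref{thm:main}, \ref{thm:2-3-conn}, and \ref{thm:maps}.
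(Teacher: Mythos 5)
Your proposal is correct and follows essentially the same route as the paper: analytic continuation of the algebraic function $f$ to a $\Delta$-domain at $\rho$ (the paper cites a compactness argument for this), followed by the transfer theorem applied to the $(1-x/\rho)^{3/2}$ term with $\Gamma(-3/2)=4\sqrt{\pi}/3$. No meaningful difference in approach or content.
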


\begin{proof}
	Since $f(x)$ is algebraic and $\rho$ is the unique singularity in the circle boarding the disk of convergence, one can  show by a classical compactness argument (see for instance the proof of \cite[Theorem 2.19]{D09}) that $f(x)$ is analytic in a $\Delta$-domain at $\rho$.
	We can then apply the \textit{transfer theorem} \cite[Corollary VI.1]{FS09} to the local representation \eqref{puis:f}, and deduce the  estimate as claimed, using  the relation $\Gamma(-3/2) = 4\sqrt{\pi}/3$.
\end{proof}

\paragraph{Proof of Theorem \ref{thm:main}.}

The system of equations \eqref{sys:Cdot} shows that $C^{\bullet} = C^{\bullet}(x)$ is an analytic function of $D = D(x)$.
This implies in particular that they both have the same singular behaviour. We first compute the equation satisfied  by $D$ from \eqref{sys:Cdot} minus the first equation.
After eliminating all the other variables, we obtain a polynomial in $D$ and $x$ with six factors.
The following three factors: $281474976710656$, $(1+D)^{48}$ and $(x+2)^{18}$ cannot be equal to zero.
We can also discard two other factors, one since its expansion at $x=0$ has constant term $-1/2$, different from zero, while the other admits an expansion of the form $\frac{1}{2}x^6 + \frac{1}{4}x^7 + O(x^8)$, which does not agree with the actual exponential generating function of networks starting with $ \frac{1}{2}x^6 + \frac{3}{4}x^7$.
Hence the minimal polynomial of $D$ must be the remaining factor of degree 29
\begin{equation}\label{poly:D}
    P_{D}(D(x),x) = \sum_{i=0}^{29} d_i(x)\cdot D(x)^i.
\end{equation}
The discriminant $p_D(x)$ of $P_{D}(D,x)$ with respect to $D$ has several irreducible factors and we have to locate the one having the dominant singularity as its root, which must be positive and less than 1.
Once we discard factors that do not have positive real roots less than 1, we have
\begin{equation*}	
	p_D(x) = f(x) g(x)^2 h(x)^3,
\end{equation*}	
where $f,g$ have respective degrees $155$ and $78$, and $h$ is the polynomial of degree 14 in the statement of Theorem \ref{thm:main}.
In order to rule out $g$ and $f$, let us first recall that the dominant singularity of all labelled planar graphs is  $\rho_1 \approx 0.0367$ \cite{GN09}.
The only candidate root for $g$ is $0.00021$, which can then be discarded because it is less than $\rho_1$.
The polynomial $f$ has two candidate solutions: one is $0.026$ and it is discarded for the same reason as before; the other one is $0.86$ and is discarded because it is larger than the singularity $\tau \approx 0.24451$ of 3-connected 4-regular graphs.
Hence the dominant singularity $\rho$ of $D(x)$ (and of $C(x)$ as argued above) is $\rho\approx 0.24377$ the smallest positive root of $h$.

To compute the minimal polynomial of $C^{\bullet}$, we eliminate all the other variables from \eqref{sys:Cdot} and obtain a polynomial equation in $D$, $C^{\bullet}$ and $x$.
We compute its resultant with $P_D$ with respect to $D$ to obtain a polynomial in $C^{\bullet}$ and $x$ only.
It has also six factors.
We can discard four of them as they trivially cannot be equal to zero, as before.
Another factor admits an expansion with constant term $7/8$, different from zero, and can be discarded too.
Finally the minimal polynomial of $C^{\bullet}$ is given by the remaining factor of degree 29
\begin{equation}\label{poly:Cdot}
    P_{C^{\bullet}}(C^{\bullet}(x),x) = \sum_{i=0}^{29} c_i(x)\cdot C^{\bullet}(x)^i.
\end{equation}
As a sanity check, one can expand the first terms and verify that they indeed count connected 4-regular planar graphs rooted at a vertex (see the corresponding \texttt{Maple} session and compare with Table 1 at the end of \cite{NRR19}, remembering that there are $n$ ways to root a graph with $n$ vertices).

From $P_{C^{\bullet}}(C^{\bullet},x)$ we compute the Puiseux expansion of $C^{\bullet}(x)$ at $x=\rho$ associated to the combinatorial branch, which turns out to be
\footnote{
	Note that this computation is done without using any numerical approximation (in particular that of $\rho$) and is thus exact (albeit hard to check by hand, i.e. without a computer algebra system, due to the size of the coefficients).
	In particular, the fact that $C^{\bullet}_1=0$ holds algebraically.
	On the other hand, the approximations can be computed using $\rho\approx 0.24377$, as the coefficients are algebraic functions of $\rho$, or from the minimal polynomials satisfied by the coefficients.
} 
\begin{equation}\label{eq:puis_C}
	C^{\bullet}(x) = C^{\bullet}_0 + C^{\bullet}_2X^2 + C^{\bullet}_3X^3 + O(X^4),
\end{equation}
where $X = \sqrt{1 - {x}/{\rho}}$, $C_0\approx 0.000057592$, $C_2\approx -0.00098931$ and $C_3\approx 0.0032877$.

To obtain the estimate for the coefficients of $C(x)$, we apply Lemma \ref{lem:analytic} to \eqref{eq:puis_C} and divide the resulting estimate of the coefficients of $C^{\bullet}(x)$ by $n$ since there are $n$ different ways to root a graph of size $n$ at a vertex.
By integrating \eqref{eq:puis_C} we obtain the Puiseux expansion $C(x) = C_0 + C_2X^2 + C_3X^3 + O(X^4)$.
However, the constant $C_0$ is undetermined after integration.
The estimate for the coefficients of $G(x)$ follows from $G(x) = \exp(C(x))$ and the corresponding Puiseux expansion
\begin{equation*}
	G(x) = G_0 + G_2X^2 + G_3X^3 + O(X^4).
\end{equation*}
Since $G_3 = e^{C_0}C_3$, this coefficient cannot be determined either.
As mentioned after the statement of Theorem \ref{thm:main}, we have estimated the constant $g=G_3/\Gamma(-3/2)$ from the first values of the coefficients $g_n$.
A similar situation occurs in \cite{NRR19+}.
There, we circumvented it by using the so-called ``dissymmetry theorem''.
For this, one needs in particular the generating function of \emph{unrooted} 3-connected 4-regular planar graphs, which means integrating $T_1(u,v)$ with respect to $u$.
We are not able to compute this integral; besides the fact that the size of the  equation defining $T_1(u,v)$ is rather large, it defines a curves of genus 1, hence it does not admit a rational parametrization.

\paragraph{Remark.}
Given the minimal polynomial of $xC'(x)$, it is possible in principle  to find linear
differential operators for $C(x)$ and for $G(x)$ and consequently linear
recursions with polynomial coefficients for the sequences of interest providing an effective formulation of Corollary 1.2 in \cite{NRR19}
(we are indebted to an anonymous referee for this remark). However the polynomial in Equation \eqref{eq:puis_C} is of degree 29 in $C^\bullet$ and 135 in $x$ with several integer coefficients having more than 40 digits, thus we have not tried to obtain explicit recurrences.

\paragraph{Proof of Theorem \ref{thm:2-3-conn}.}

We will now compute an estimate for the number of 3-connected 4-regular planar graphs.
By first plugging Equation \eqref{eq:TT} into the minimal polynomial of $T_1$, we obtain the minimal polynomial of $T^{(1)}(x,u,v)$.
Setting then $v=0$ and $u=1$, and taking the root edge into account, it is a simple matter to check that we obtain a polynomial satisfied by the generating function $T^{\bullet}(x)$ of 3-connected 4-regular planar graphs rooted at a vertex, namely
\begin{equation*}
	4T^{\bullet}(x) = T^{(1)}(x,1,0).
\end{equation*}
This polynomial is of the form
\begin{equation*}\label{poly:Tdot}
    P_{T^{\bullet}}(T^{\bullet}(x),x) = \sum_{i=0}^{8} t_i(x)\cdot T^{\bullet}(x)^i,
\end{equation*}
where each $t_i(x)$ ($i=0,\ldots,8$) is explicitly given in Appendix \ref{app:min-pol-Tdot}.

Next, we compute the discriminant of $P_{T^{\bullet}}$ with respect to $T^{\bullet}(x)$.
It has five factors and we can discard two of them for trivial reasons.
Another factor admits $0.0014891$ as positive root, which is smaller than $\rho_1 \approx 0.0367$ and can be discarded.
While another one has the positive root $0.53898$, larger than $1/4$ the dominant singularity of the generating function of irreducible quadrangulations.
It can then be discarded since the class of irreducible quadrangulations is contained in the class of simple 3-connected quadrangulations, which is in bijection with the class of 3-connected 4-regular planar graphs.
So the dominant singularity must be the smallest positive root $\tau\approx 0.24451$ of the remaining factor, namely
\begin{equation*}
	3645x^2 - 4752x + 944 = 0,
\end{equation*}
as claimed.
The Puiseux expansion of $T^{\bullet}(x)$ near $\tau$ is of the form
\begin{equation}\label{eq:puis_T}
	T^{\bullet}(x) = T^{\bullet}_0 + T^{\bullet}_2X^2 + T^{\bullet}_3X^3 + O(X^4),
\end{equation}
where $X = \sqrt{1 - {x}/{\tau}}$, $T^{\bullet}_0\approx 0.000057426$, $T^{\bullet}_2\approx -0.00092862$ and $T^{\bullet}_3\approx 0.0028525$.
We conclude by applying Lemma \ref{lem:analytic} to \eqref{eq:puis_T} and dividing the resulting estimate by $n$.

To prove the estimate on the number of 2-connected 4-regular planar graphs, we proceed in the same way and thus omit certain details that can be found in the \texttt{Maple} sessions.
Consider the system of equations \eqref{sys:Bdot} and eliminate all the  other variables to obtain a single irreducible bivariate polynomial equations in $x$ and $B^{\bullet}(x)$, as follows:
\begin{equation*}\label{poly:Bdot}
    P_{B^{\bullet}}(B^{\bullet}(x),x) = \sum_{i=0}^{29} b_i(x)\cdot B^{\bullet}(x)^i.
\end{equation*}
The discriminant of $P_{B^{\bullet}}$ with respect to $B^{\bullet}(x)$ has seven factors.
Only two of them have positive roots strictly smaller than one.
The smallest such root of one of the factors is $0.013756$, again smaller than $\rho_1$.
Thus, the dominant singularity $\beta$ of $B^{\bullet}(x)$ has to be the smallest positive root of the remaining factor, which is the one claimed.
The Puiseux expansion of $B^{\bullet}(x)$ near $\beta$ is of the form
\begin{equation}\label{eq:puis_B}
	B^{\bullet}(x) = B_0 + B_2X^2 + B_3X^3 + O(X^4),
\end{equation}
where $X = \sqrt{1 - {x}/{\beta}}$, $B_0\approx 0.000057583$, $B_2\approx -0.00098647$ and $B_3\approx 0.0032669$.
We conclude again by applying Lemma \ref{lem:analytic} to \eqref{eq:puis_B} and dividing the resulting estimate by $n$.

\paragraph{Proof of Theorem \ref{thm:maps}.}

The proof goes along the same lines as the two proofs above and we only briefly sketch it here.
We refer the reader to the accompanying \texttt{Maple} sessions.
First eliminate from \eqref{sys:M} and \eqref{sys:N} to obtain the minimal polynomials $P_M$ and $P_N$ satisfied by $M(x)$ and $N(x)$, respectively.
We compute the discriminants of $P_M$ with respect to $M(x)$ and of $P_N$ with respect to $N(x)$, and find in each case the unique factor with a positive root.
Then the smallest such root is the dominant singularity.
We conclude by applying Lemma \ref{lem:analytic} to the local expansions of $M(x)$ and $N(x)$ near their respective dominant singularities.

%%%%%%%%%%%%%%%%%%%%%%%%%%%%%%%%%%%%%%%%%%%%%%%%%%%%%%%%%%%%%%%%%%%%%%%%%%%%%%%
%
% Aknowledgements
%
%%%%%%%%%%%%%%%%%%%%%%%%%%%%%%%%%%%%%%%%%%%%%%%%%%%%%%%%%%%%%%%%%%%%%%%%%%%%%%%
\section{Acknowledgements}

The authors are  grateful to Thibaut Verron for suggesting the use of evaluation and multivariate polynomial interpolation to obtain the minimal polynomials of $T_1(u,v)$ and $T_2(u,v)$.
Part of this work was done while the second author was a post-doctoral researcher under the direction of Manuel Kauers, at the Institute for Algebra of the Johannes Kepler Universit{\"a}t Linz, and Michael Drmota, at the Institute for Discrete Mathematics and Geometry of the Technische Universit{\"a}t Wien, and supported by the Special Research Program F50 \textit{Algorithmic and Enumerative Combinatorics} of the Austrian Science Fund. 
We are also grateful to the anonymous referees for their careful readings of the different versions of this paper, and whose comments and suggestions helped us improve the presentation.

This research was supported by grants MTM2017-82166-P, MDM-2014-0445, Beatriu de Pin\'os BP2020 funded by the H2020 COFUND project No 801370 and AGAUR (the Catalan agency
for management of university and research grants), PID2020-113082GB-I00, the Severo Ochoa and María de Maeztu Program for Centers and Units of Excellence in R\&D (CEX2020-001084-M), and the Marie Curie RISE research network 'RandNet' MSCA-RISE-2020-101007705.

%%%%%%%%%%%%%%%%%%%%%%%%%%%%%%%%%%%%%%%%%%%%%%%%%%%%%%%%%%%%%%%%%%%%%%%%%%%%%%%
%
% Bibliography
%
%%%%%%%%%%%%%%%%%%%%%%%%%%%%%%%%%%%%%%%%%%%%%%%%%%%%%%%%%%%%%%%%%%%%%%%%%%%%%%%

   \bibliographystyle{abbrv}
\bibliography{biblio_4-regular}

%%%%%%%%%%%%%%%%%%%%%%%%%%%%%%%%%%%%%%%%%%%%%%%%%%%%%%%%%%%%%%%%%%%%%%%%%%%%%%%
%
% Appendix
%
%%%%%%%%%%%%%%%%%%%%%%%%%%%%%%%%%%%%%%%%%%%%%%%%%%%%%%%%%%%%%%%%%%%%%%%%%%%%%%%

\appendix

\section{Minimal polynomials for quadrangulations}\label{app:min-pol-B}

\textbf{Coeffients of $P_{B_0}(B_0(z,w),z,w) = \sum_{i=0}^2 p_{i,0}(z,w) B_0(z,w)^i$}:

\begin{itemize}
    \item
    $p_{2,0}(z,w) = 27z^2(wz - z - 1)^2(wz - z + 1)^3$.

    \item
    $p_{1,0}(z,w) =  - (wz - z - 1)(16w^8z^{12} - 144w^7z^{12} - 32w^7z^{11} + 560w^6z^{12} + 16w^7z^{10} + 224w^6z^{11} - 1232w^5z^{12} - 136w^6z^{10} - 672w^5z^{11} + 1680w^4z^{12} + 40w^6z^9 + 544w^5z^{10} + 1120w^4z^{11} - 1456w^3z^{12} - 296w^5z^9 - 1240w^4z^{10} - 1120w^3z^{11} + 784w^2z^{12} + 204w^5z^8 + 816w^4z^9 + 1680w^3z^{10} + 672w^2z^{11} - 240wz^{12} - 1052w^4z^8 - 1104w^3z^9 - 1336w^2z^{10} - 224wz^{11} + 32z^{12} - 4w^5z^6 + 368w^4z^7 + 2184w^3z^8 + 776w^2z^9 + 576wz^{10} + 32z^{11} - 36w^4z^6 - 1440w^3z^7 - 2280w^2z^8 - 264wz^9 - 104z^{10} - 10w^4z^5 + 444w^3z^6 + 2112w^2z^7 + 1196wz^8 + 32z^9 - w^4z^4 - 80w^3z^5 - 908w^2z^6 - 1376wz^7 - 252z^8 + 4w^3z^4 + 156w^2z^5 + 648wz^6 + 336z^7 + 2w^3z^3 - 10w^2z^4 - 104wz^5 - 144z^6 + 20w^2z^3 - 156wz^4 + 38z^5 + 42wz^3 + 163z^4 + 8wz^2 - 208z^3 - 2wz + 84z^2 - 18z + 1).$

    \item
    $p_{0,0}(z,w) = z(64w^{10}z^{14} - 704w^9z^{14} - 160w^9z^{13} + 3456w^8z^{14} + 32w^9z^{12} + 1472w^8z^{13} - 9984w^7z^{14} - 272w^8z^{12} - 6016w^7z^{13} + 18816w^6z^{14} + 224w^8z^{11} + 1264w^7z^{12} + 14336w^6z^{13} - 24192w^5z^{14} - 16w^8z^{10} - 2048w^7z^{11} - 3920w^6z^{12} - 21952w^5z^{13} + 21504w^4z^{14} + 408w^7z^{10} + 7696w^6z^{11} + 8176w^5z^{12} + 22400w^4z^{13} - 13056w^3z^{14} - 88w^7z^9 - 2536w^6z^{10} - 15712w^5z^{11} - 11312w^4z^{12} - 15232w^3z^{13} + 5184w^2z^{14} - 8w^7z^8 + 928w^6z^9 + 7800w^5z^{10} + 19120w^4z^{11} + 10192w^3z^{12} + 6656w^2z^{13} - 1216wz^{14} - 60w^6z^8 - 3304w^5z^9 - 14120w^4z^{10} - 14144w^3z^{11} - 5744w^2z^{12} - 1696wz^{13} + 128z^{14} + 8w^6z^7 + 664w^5z^8 + 5520w^4z^9 + 15816w^3z^{10} + 6128w^2z^{11} + 1840wz^{12} + 192z^{13} - 168w^5z^7 - 1676w^4z^8 - 4520w^3z^9 - 10808w^2z^{10} - 1376wz^{11} - 256z^{12} - 3w^5z^6 - 168w^4z^7 + 1624w^3z^8 + 1408w^2z^9 + 4136wz^{10} + 112z^{11} + 51w^4z^6 + 1712w^3z^7 - 388w^2z^8 + 232wz^9 - 680z^{10} + 33w^4z^5 - 514w^3z^6 - 2584w^2z^7 - 296wz^8 - 176z^9 + 2w^4z^4 + 244w^3z^5 + 1458w^2z^6 + 1496wz^7 + 140z^8 + 2w^3z^4 - 774w^2z^5 - 1563wz^6 - 296z^7 - 4w^3z^3 + 170w^2z^4 + 492wz^5 + 571z^6 - 54w^2z^3 - 98wz^4 + 5z^5 - 56wz^3 - 12z^4 - 7wz^2 + 162z^3 + 4wz - 77z^2 + 29z - 2).$
\end{itemize}
\textbf{Coeffients of $P_{B_1}(B_1(z,w),z,w) = \sum_{i=0}^2 p_{i,1}(z,w) B_1(z,w)^i$}:

\begin{itemize}
    \item
    $p_{2,1}(z,w) = 27z(wz - z - 1)^2(wz - z + 1)^3$.

    \item
    $p_{1,1}(z,w) = - 2w(wz - z - 1)(16w^7z^{11} - 112w^6z^{11} - 16w^6z^{10} + 336w^5z^{11} + 24w^6z^9 + 96w^5z^{10}
    - 560w^4z^{11} - 176w^5z^9 - 240w^4z^{10} + 560w^3z^{11} + 56w^5z^8 + 520w^4z^9 + 320w^3z^{10} - 336w^2z^{11}
    - 42w^5z^7 - 232w^4z^8 - 800w^3z^9 - 240w^2z^{10} + 112wz^{11} + 272w^4z^7 + 368w^3z^8 + 680w^2z^9 + 96wz^{10}
    - 16z^{11} - 126w^4z^6 - 684w^3z^7 - 272w^2z^8 - 304wz^9 - 16z^{10} + 2w^4z^5 + 548w^3z^6 + 840w^2z^7
    + 88wz^8 + 56z^9 - 146w^3z^5 - 888w^2z^6 - 506wz^7 - 8z^8 + 4w^3z^4 + 330w^2z^5 + 636wz^6 + 120z^7 - 36w^2z^4
    - 230wz^5 - 170z^6 + 2w^2z^3 + 24wz^4 + 44z^5 + w^2z^2 + 20wz^3 + 8z^4 + 10wz^2 - 76z^3 - 2wz + 79z^2 - 18z + 1).$

    \item
    $p_{0,1}(z,w) = - 4w^2z^2(16w^9z^{12} - 160w^8z^{12} - 16w^8z^{11} + 704w^7z^{12} + 24w^8z^{10} + 160w^7z^{11}
    - 1792w^6z^{12} - 232w^7z^{10} - 672w^6z^{11} + 2912w^5z^{12} + 56w^7z^9 + 968w^6z^{10} + 1568w^5z^{11}
    - 3136w^4z^{12} - 15w^7z^8 - 408w^6z^9 - 2280w^5z^{10} - 2240w^4z^{11} + 2240w^3z^{12} + 171w^6z^8
    + 1192w^5z^9 + 3320w^4z^{10} + 2016w^3z^{11} - 1024w^2z^{12} - 45w^6z^7 - 722w^5z^8 - 1800w^4z^9
    - 3064w^3z^{10} - 1120w^2z^{11} + 272wz^{12} + 2w^6z^6 + 330w^5z^7 + 1634w^4z^8 + 1480w^3z^9
    + 1752w^2z^{10} + 352wz^{11} - 32z^{12} - 67w^5z^6 - 850w^4z^7 - 2211w^3z^8 - 616w^2z^9
    - 568wz^{10} - 48z^{11} + 4w^5z^5 + 173w^4z^6 + 984w^3z^7 + 1799w^2z^8 + 88wz^9 + 80z^{10}
    - 19w^4z^5 - 40w^3z^6 - 477w^2z^7 - 812wz^8 + 8z^9 + 2w^4z^4 - 94w^3z^5 - 278w^2z^6
    + 22wz^7 + 156z^8 + w^4z^3 + 18w^3z^4 + 258w^2z^5 + 307wz^6 + 36z^7 + 9w^3z^3 - 106w^2z^4
    - 178wz^5 - 97z^6 - 2w^3z^2 + 55w^2z^3 + 195wz^4 + 29z^5 - 17w^2z^2 - 71wz^3 - 109z^4
    + w^2z + wz^2 - 21z^3 - wz + 44z^2 - 15z + 1).$
\end{itemize}

\newpage
\textbf{Coeffients of $P_{B_0^*}(B_0^*(z,w),z,w) = \sum_{i=0}^2 p_{i,2}(z,w) B_0^*(z,w)^i$}:

\begin{itemize}
    \item
    $p_{2,2}(z,w) = 27(wz - z + 1)^2$.

    \item
    $p_{1,2}(z,w) = 16w^5z^8 - 80w^4z^8 - 32w^4z^7 + 160w^3z^8 + 24w^4z^6 + 128w^3z^7 - 160w^2z^8 - 80w^3z^6 - 192w^2z^7 + 80wz^8 + 24w^3z^5 + 96w^2z^6 + 128wz^7 - 16z^8 + 12w^3z^4 - 72w^2z^5 - 48wz^6 - 32z^7 + 96w^2z^4 + 72wz^5 + 8z^6 + 12w^2z^3 - 228wz^4 - 24z^5 + 2w^2z^2 + 120wz^3 + 120z^4 + 8wz^2 - 132z^3 - 4wz + 98z^2 - 32z + 2.$

    \item
    $p_{0,2}(z,w) = 4z^3(8w^5z^7 - 40w^4z^7 - 16w^4z^6 + 80w^3z^7 + 8w^4z^5 + 64w^3z^6 - 80w^2z^7 - 24w^3z^5 - 96w^2z^6 + 40wz^7 + 20w^3z^4 + 24w^2z^5 + 64wz^6 - 8z^7 + 2w^3z^3 - 60w^2z^4 - 8wz^5 - 16z^6 + 29w^2z^3 + 60wz^4 - 2w^2z^2 - 64wz^3 - 20z^4 + 22wz^2 + 33z^3 + 2wz - 20z^2 + 25z - 2).$
\end{itemize}

\section{Minimal polynomials for 3-connected planar maps}\label{app:min-pol-T1T2}

\textbf{Coeffients of $P_{T_1}(T_1(u,v),u,v) = \sum_{i=0}^8 t_{i,1}(u,v) T_1(u,v)^i$}:

\begin{itemize}
    \item
    $t_{8,1}(u,v) = 1.$

    \item
    $t_{7,1}(u,v) = - 8u^2 + 16uv + 2u + 10.$

    \item
    $t_{6,1}(u,v) = 28u^4 - 112u^3v + 112u^2v^2 - 14u^3 + 28u^2v - 69u^2 + 152uv + 68u + 42.$

    \item
    $t_{5,1}(u,v) = - 56u^6 + 336u^5v - 672u^4v^2 + 448u^3v^3 + 42u^5 - 168u^4v + 168u^3v^2 + 204u^4 - 900u^3v + 984u^2v^2 - 408u^3 + 840u^2v - 92u^2 + 520uv + 352u + 96.$

    \item
    $t_{4,1}(u,v) = 70u^8 - 560u^7v + 1680u^6v^2 - 2240u^5v^3 + 1120u^4v^4 - 70u^7 + 420u^6v - 840u^5v^2 + 560u^4v^3 - 335u^6 + 2220u^5v - 4860u^4v^2 + 3520u^3v^3 + 1020u^5 - 4200u^4v + 4320u^3v^2 - 170u^4 - 988u^3v + 2728u^2v^2 - 1604u^3 + 3280u^2v + 394u^2 + 776uv + 692u + 129.$

    \item
    $t_{3,1}(u,v) = - 56u^{10} + 560u^9v - 2240u^8v^2 + 4480u^7v^3 - 4480u^6v^4 + 1792u^5v^5 + 70u^9 - 560u^8v + 1680u^7v^2 - 2240u^6v^3 + 1120u^5v^4 + 330u^8 - 2920u^7v + 9600u^6v^2 - 13920u^5v^3 + 7520u^4v^4 - 1360u^7 + 8400u^6v - 17280u^5v^2 + 11840u^4v^3 + 760u^6 - 1248u^5v - 4416u^4v^2 + 7744u^3v^3 + 2896u^5 - 11872u^4v + 12256u^3v^2 - 2482u^4 + 2756u^3v + 2736u^2v^2 - 1562u^3 + 4224u^2v + 1250u^2 + 424uv + 550u + 102.$

    \item
    $t_{2,1}(u,v) = 28u^{12} - 336u^{11}v + 1680u^{10}v^2 - 4480u^9v^3 + 6720u^8v^4 - 5376u^7v^5 + 1792u^6v^6 - 42u^{11} + 420u^{10}v - 1680u^9v^2 + 3360u^8v^3 - 3360u^7v^4 + 1344u^6v^5 - 195u^{10} + 2160u^9v - 9480u^8v^2 + 20640u^7v^3 - 22320u^6v^4 + 9600u^5v^5 + 1020u^9 - 8400u^8v + 25920u^7v^2 - 35520u^6v^3 + 18240u^5v^4 - 970u^8 + 4472u^7v - 3120u^6v^2 - 10144u^5v^3 + 12512u^4v^4 - 2584u^7 + 15936u^6v - 33024u^5v^2 + 22976u^4v^3 + 4122u^6 - 12600u^5v + 6024u^4v^2 + 5536u^3v^3 + 534u^5 - 6960u^4v + 9744u^3v^2 - 2829u^4 + 5636u^3v + 392u^2v^2 + 722u^3 + 1444u^2v + 991u^2 - 64uv + 104u + 44$.

    \item
    $t_{1,1}(u,v) =  - 8u^{14} + 112u^{13}v - 672u^{12}v^2 + 2240u^{11}v^3 - 4480u^{10}v^4 + 5376u^9v^5 - 3584u^8v^6 + 1024u^7v^7 + 14u^{13} - 168u^{12}v + 840u^{11}v^2 - 2240u^{10}v^3 + 3360u^9v^4 - 2688u^8v^5 + 896u^7v^6 + 64u^{12} - 852u^{11}v + 4680u^{10}v^2 - 13600u^9v^3 + 22080u^8v^4 - 19008u^7v^5 + 6784u^6v^6 - 408u^{11} + 4200u^{10}v - 17280u^9v^2 + 35520u^8v^3 - 36480u^7v^4 + 14976u^6v^5 + 548u^{10} - 3848u^9v + 8576u^8v^2 - 2944u^7v^3 - 11840u^6v^4 + 10880u^5v^5 + 1136u^9 - 9376u^8v + 29280u^7v^2 - 40960u^6v^3 + 21632u^5v^4 - 2854u^8 + 13828u^7v - 19608u^6v^2 + 3504u^5v^3 + 6464u^4v^4 + 850u^7 + 1248u^6v - 11112u^5v^2 + 10560u^4v^3 + 1392u^6 - 6412u^5v + 6344u^4v^2 + 736u^3v^3 - 1312u^5 + 2216u^4v + 1160u^3v^2 - 356u^4 + 2104u^3v - 504u^2v^2 + 626u^3 - 312u^2v - 90u^2 - 96uv - 40u + 8.$

    \item
    $t_{0,1}(u,v) = u^2(u^{14} - 16u^{13}v + 112u^{12}v^2 - 448u^{11}v^3 + 1120u^{10}v^4 - 1792u^9v^5 + 1792u^8v^6 - 1024u^7v^7 + 256u^6v^8 - 2u^{13} + 28u^{12}v - 168u^{11}v^2 + 560u^{10}v^3 - 1120u^9v^4 + 1344u^8v^5 - 896u^7v^6 + 256u^6v^7 - 9u^{12} + 140u^{11}v - 924u^{10}v^2 + 3360u^9v^3 - 7280u^8v^4 + 9408u^7v^5 - 6720u^6v^6 + 2048u^5v^7 + 68u^{11} - 840u^{10}v + 4320u^9v^2 - 11840u^8v^3 + 18240u^7v^4 - 14976u^6v^5 + 5120u^5v^6 - 118u^{10} + 1092u^9v - 3768u^8v^2 + 5344u^7v^3 - 672u^6v^4 - 5568u^5v^5 + 3968u^4v^6 - 196u^9 + 2032u^8v - 8512u^7v^2 + 17984u^6v^3 - 19136u^5v^4 + 8192u^4v^5 + 724u^8 - 4760u^7v + 10848u^6v^2 - 8608u^5v^3 - 1344u^4v^4 + 3328u^3v^5 - 514u^7 + 1488u^6v + 1368u^5v^2 - 7008u^4v^3 + 4864u^3v^4 + 58u^6 + 352u^5v - 1252u^4v^2 + 80u^3v^3 + 1232u^2v^4 + 40u^5 - 96u^4v - 392u^3v^2 + 1072u^2v^3 - 8u^4 + 292u^2v^2 + 128uv^3 + 32uv^2 - 16v^2).$
\end{itemize}

\newpage
\textbf{Coeffients of $P_{T_2}(T_2(u,v),u,v) = \sum_{i=0}^8 t_{i,2}(u,v) T_2(u,v)^i$}:

\begin{itemize}
    \item
    $t_{8,2}(u,v) = (v + 1)^5.$

    \item
    $t_{7,2}(u,v) = 2v(v + 1)^4(8uv + 8u - 4v + 1).$

    \item
    $t_{6,2}(u,v) = 8v^2(v + 1)^3(14u^2v^2 + 28u^2v - 14uv^2 + 14u^2 - 9uv + 2v^2 + 5u - 5v - 2).$

    \item
    $t_{5,2}(u,v) = 8v^3(v + 1)^2(56u^3v^3 + 168u^3v^2 - 84u^2v^3 + 168u^3v - 129u^2v^2 + 24uv^3 + 56u^3 - 6u^2v - 48uv^2 + 39u^2 - 96uv + 16v^2 - 24u - 6).$

    \item
    $t_{4,2}(u,v) = 16v^4(v + 1)(70u^4v^4 + 280u^4v^3 - 140u^3v^4 + 420u^4v^2 - 340u^3v^3 + 60u^2v^4 + 280u^4v - 180u^3v^2 - 90u^2v^3 + 70u^4 + 100u^3v - 417u^2v^2 + 92uv^3 + 80u^3 - 324u^2v + 94uv^2 - 57u^2 - 20uv + 25v^2 - 22u + 22v + 2).$

    \item
    $t_{3,2}(u,v) = 32v^5(56u^5v^5 + 280u^5v^4 - 140u^4v^5 + 560u^5v^3 - 465u^4v^4 + 80u^3v^5 + 560u^5v^2 - 460u^4v^3 - 80u^3v^4 + 280u^5v + 10u^4v^2 - 788u^3v^3 + 208u^2v^4 + 56u^5 + 240u^4v - 1084u^3v^2 + 412u^2v^3 + 95u^4 - 524u^3v + 169u^2v^2 + 96uv^3 - 68u^3 - 66u^2v + 182uv^2 - 31u^2 + 108uv + 19v^2 + 22u + 26v + 8).$

    \item
    $t_{2,2}(u,v) = 64v^6(28u^6v^5 + 140u^6v^4 - 84u^5v^5 + 280u^6v^3 - 270u^5v^4 + 60u^4v^5 + 280u^6v^2 - 240u^5v^3 - 90u^4v^4 + 140u^6v + 60u^5v^2 - 672u^4v^3 + 232u^3v^4 + 28u^6 + 180u^5v - 876u^4v^2 + 440u^3v^3 + 66u^5 - 396u^4v + 164u^3v^2 + 150u^2v^3 - 42u^4 - 64u^3v + 265u^2v^2 - 20u^3 + 155u^2v + 12uv^2 + 40u^2 + 18uv + 12u + 7v + 5)$.

    \item
    $t_{1,2}(u,v) = 128v^7(8u^7v^5 + 40u^7v^4 - 28u^6v^5 + 80u^7v^3 - 87u^6v^4 + 24u^5v^5 + 80u^7v^2 - 68u^6v^3 - 48u^5v^4 + 40u^7v + 38u^6v^2 - 300u^5v^3 + 128u^4v^4 + 8u^7 + 72u^6v - 372u^5v^2 + 228u^4v^3 + 25u^6 - 156u^5v + 67u^4v^2 + 112u^3v^3 - 12u^5 - 38u^4v + 168u^3v^2 - 5u^4 + 76u^3v + u^2v^2 + 20u^3 - 8u^2v + 3u^2 - 16uv - 6u + 1).$

    \item
    $t_{0,2}(u,v) = 256u^2v^8(u^6v^5 + 5u^6v^4 - 4u^5v^5 + 10u^6v^3 - 12u^5v^4 + 4u^4v^5 + 10u^6v^2 - 8u^5v^3 - 10u^4v^4 + 5u^6v + 8u^5v^2 - 55u^4v^3 + 28u^3v^4 + u^6 + 12u^5v - 65u^4v^2 + 46u^3v^3 + 4u^5 - 25u^4v + 8u^3v^2 + 33u^2v^3 - u^4 - 10u^3v + 38u^2v^2 + 5u^2v + 12uv^2 + 4uv - v).$
\end{itemize}

\section{Minimal polynomials for 3-connected planar graphs}\label{app:min-pol-Tdot}

\textbf{Coeffients of $P_{T^{\bullet}}(T^{\bullet}(x),x) = \sum_{i=0}^8 t_i(x) T^{\bullet}(x)^i$}:

\begin{itemize}
    \item
    $t_8(x) = 16777216.$

    \item
    $t_7(x) = 4194304(x + 1)(5 - 4x).$

    \item
    $t_6(x) = 7340032x^4 - 3670016x^3 - 18087936x^2 + 17825792x + 11010048.$

    \item
    $t_5(x) = -1835008x^6 + 1376256x^5 + 6684672x^4 - 13369344x^3 - 3014656x^2 + 11534336x + 3145728.$

    \item
    $t_4(x) = 286720x^8 - 286720x^7 - 1372160x^6 + 4177920x^5 - 696320x^4 - 6569984x^3 + 1613824x^2 + 2834432x + 528384.$

    \item
    $t_3(x) = -28672x^{10} + 35840x^9 + 168960x^8 - 696320x^7 + 389120x^6 + 1482752x^5 - 1270784x^4 - 799744x^3 + 640000x^2 + 281600x + 52224.$

    \item
    $t_2(x) = 1792x^{12} - 2688x^{11} - 12480x^{10} + 65280x^9 - 62080x^8 - 165376x^7 + 263808x^6 + 34176x^5 - 181056x^4 + 46208x^3 + 63424x^2 + 6656x + 2816.$

    \item
    $t_1(x) = -64x^{14} + 112x^{13} + 512x^{12} - 3264x^{11} + 4384x^{10} + 9088x^9 - 22832x^8 + 6800x^7 + 11136x^6- 10496x^5 - 2848x^4 + 5008x^3 - 720x^2 - 320x + 64.$

    \item
    $t_0(x) = x^6(x^2 + 4x - 1)(x^8 - 6x^7 + 16x^6 - 2x^5 - 94x^4 + 178x^3 - 82x^2 - 8x + 8).$
\end{itemize}

\section{Minimal polynomials for simple maps}\label{app:min-pol-maps}

\textbf{Coeffients of $P_{M}(M(x),x) = \sum_{i=0}^4 m_i(x) M(x)^i$}:

\begin{itemize}
    \item
    $m_4(x) = (2x^2 + 3x + 3)^6(x + 1)^4(2x + 1)^2(x - 1)^2x^6.$

    \item
    $m_3(x) =  - 2x^4(x - 1)(64x^{14} + 480x^{13} + 1440x^{12} + 2496x^{11} + 276x^{10} - 11546x^9 - 26420x^8 - 19509x^7 + 6393x^6 + 19014x^5 + 12975x^4 + 4608x^3 + 702x^2 - 135x - 54)(x + 1)^3(2x^2 + 3x + 3)^3.$

    \item
    $m_2(x) =  - x^2(4608x^{26} + 39936x^{25} + 178688x^{24} + 520704x^{23} + 1094336x^{22} + 1543680x^{21} + 245408x^{20} - 8566240x^{19} - 32715326x^{18} - 59854300x^{17} - 53501976x^{16} - 7389020x^{15} + 36335841x^{14} + 52316608x^{13} + 51994151x^{12} + 41986758x^{11} + 22019337x^{10} + 1419738x^9 - 10788681x^8 - 14542164x^7 - 13339809x^6 - 9695160x^5 - 5505759x^4 - 2350134x^3 - 725355x^2 - 148230x - 14823)(x + 1)^2.$

    \item
    $m_1(x) = (x + 1)(27648x^{31} + 322560x^{30} + 1847808x^{29} + 6918144x^{28} + 18257536x^{27} + 34084608x^{26} + 43349120x^{25} + 40257248x^{24} + 50673996x^{23} + 88945348x^{22} + 99083870x^{21} + 51494754x^{20} + 9311464x^{19} - 4802994x^{18} - 30341844x^{17} - 55300330x^{16} - 51227730x^{15} - 36294430x^{14} - 27913810x^{13} - 19958630x^{12} - 10165107x^{11} - 3664163x^{10} - 1317483x^9 - 472215x^8 + 95764x^7 + 263384x^6 + 140418x^5 + 22266x^4 - 10179x^3 - 6345x^2 - 783x + 459).$

    \item
    $m_0(x) =  - x^6(x^2 + 4x - 1)(34560x^{26} + 331776x^{25} + 1792512x^{24} + 6458368x^{23} + 16625952x^{22} + 29597056x^{21} + 34923536x^{20} + 27157632x^{19} + 14306863x^{18} + 2833960x^{17} - 8516393x^{16} - 17003008x^{15} - 18205069x^{14} - 14628522x^{13} - 10556741x^{12} - 6840238x^{11} - 3542614x^{10} - 1345848x^9 - 348274x^8 - 14552x^7 + 80947x^6 + 78354x^5 + 38619x^4 + 8694x^3 - 1215x^2 - 1512x - 459).$
\end{itemize}
\textbf{Coeffients of $P_{N}(N(x),x) = \sum_{i=0}^4 n_i(x) N(x)^i$}:

\begin{itemize}
    \item
    $n_4(x) = (x^2 + x + 2)^6(x + 1)^4(x - 1)^2x^3.$

    \item
    $n_3(x) =  - 2x^2(x - 1)(2x^{11} + 10x^{10} + 21x^9 + 39x^8 - 70x^7 - 282x^6 - 329x^5 + 112x^4 + 271x^3 + 67x^2 + 25x + 6)(x + 1)^3(x^2 + x + 2)^3.$

    \item
    $n_2(x) =   - x(18x^{22} + 84x^{21} + 344x^{20} + 832x^{19} + 1881x^{18} + 2362x^{17} - 213x^{16} - 22272x^{15} - 59887x^{14} - 60780x^{13} - 48821x^{12} + 2482x^{11} + 70283x^{10} + 76870x^9 + 64053x^8 + 35032x^7 - 9763x^6 - 33312x^5 - 24499x^4 - 8394x^3 - 2308x^2 - 328x - 48)(x + 1)^2.$

    \item
    $n_1(x) = (x + 1)(108x^{26} + 828x^{25} + 3798x^{24} + 12498x^{23} + 27832x^{22} + 44416x^{21} + 45122x^{20} + 52780x^{19} + 83480x^{18} + 38444x^{17} + 52052x^{16} + 63060x^{15} - 24937x^{14} + 18259x^{13} - 10897x^{12} - 78617x^{11} - 16174x^{10} - 16450x^9 - 26700x^8 - 984x^7 + 3531x^6 + 3159x^5 + 361x^4 - 641x^3 + 144x^2 - 48x + 8).$

    \item
    $n_0(x) =   - x^6(x^2 + 4x - 1)(135x^{21} + 756x^{20} + 3843x^{19} + 10936x^{18} + 25151x^{17} + 30928x^{16} + 31103x^{15} + 27536x^{14} + 3310x^{13} - 7148x^{12} - 10618x^{11} - 21620x^{10} - 13409x^9 - 5500x^8 - 4305x^7 - 60x^6 + 965x^5 + 604x^4 + 201x^3 - 104x^2 + 8x - 8).$
\end{itemize}

\end{document}